\numberwithin{equation}{section}
\def\N{\mathbb N}
\def\R{\mathbb R}
\def\L{\mathcal{L}}
\def\D{\mathcal{D}}
\def\H{\mathcal H}
\def\F{\mathcal{F}}
\def\ID{\mathcal{ID}}
\newtheorem{thm}{\textbf Theorem}[section]
\newtheorem{pro}[thm]{\textbf Proposition}
\newtheorem{defi}[thm]{\textbf Definition}
\newtheorem{lem}[thm]{\textbf Lemma}
\newtheorem{cor}[thm]{\textbf Corollary}
\newtheorem{que}[thm]{\textbf Question}
\newtheorem{rem}[thm]{\textbf Remark}
  \theoremstyle{remark}
\DeclareSymbolFont{bbold}{U}{bbold}{m}{n}
\DeclareSymbolFontAlphabet{\mathbbold}{bbold}
\begin{document}

\title{On thin carpets for doubling measures }

\author{Changhao Chen, Shengyou Wen}

\address{Department of Mathematical Sciences, P.O. Box 3000, 90014
University of Oulu, Finland}
\email{changhao.chen@oulu.fi}

\address{Department of Mathematics, Hubei University, Wuhan 430062, China}
\email{sywen\_65@163.com }

\date{\today}

\thanks{The first author was supported by the Vilho, Yrj{\"o}, and Kalle V{\"a}is{\"a}l{\"a} foundation.}

\begin{abstract}
We study subsets of $\R^{d}$ which are thin for doubling measures or isotropic doubling measures. We show that any subset of $\R^{d}$ with Hausdorff dimension less than or equal to $d-1$ is thin for  isotropic doubling measures. We also prove that a self-affine set that satisfies  $OSCH$ (open set condition with holes) is thin for isotropic doubling measures. For doubling measures, we prove that Bara\'nski carpets  are thin for  doubling measures. 

\medskip
\noindent{\bf Keywords\,\,}\ Doubling measures, Isotropic doubling measures, Bara\'nski carpets.
\end{abstract}

\maketitle

\section{Introduction}
A Borel regular measure $\mu$ on metric space $X$ is called \emph{doubling} if there is a constant $C \geq 1$ such that 
\[
0 <\mu(B(x,2 r)\leq C\mu(B(x,r))<\infty,
\]
for any $x \in X$ and
$0<r<\infty$. We call $C$ the  \emph{doubling constant} of $\mu$. Denoted by
$\mathcal{D}(X)$ all the doubling measures on $X$. A closely related concept with doubling measures is a \emph{doubling metric space}. A metric space is called doubling metric space if there exist positive integer $N$ such that any ball of radius $r$ can be covered by a collection of $N$ balls of radius $r/2$. It's easy to see that $\D(X) \neq \emptyset$ implies  $X$ is doubling. On the other hand, if the space $X$ is  doubling and complete, then $\D(X) \neq \emptyset$, for more details see \cite{krs,ls,vk,wu2}. A subset $E$ of $X$ is called \emph{thin} for doubling measures if $\mu(E) = 0 $ for every $\mu \in \D(X)$.  Being thin  for \emph{isotropic doubling measures} is defined analogously. In this paper we are going to investigate some subsets of $\mathbb{R}^d$  are  thin for doubling measures or isotropic doubling measures. First we recall a useful estimate for doubling measures, see \cite[Chapter 13]{h} \cite{wu1}.
\begin{lem} \label{lemma1}
Let $\mu \in \D(\R^d)$  and $Q_1, Q_2$ be two cubes with $Q_1 \subset Q_2$. Then
\begin{equation}
 C^{-1} \bigg( \frac{|Q_1|}{|Q_2|} \bigg)^\beta \leq \frac{\mu(Q_1)}{\mu(Q_2)}\leq 
C\bigg(\frac{|Q_1|}{|Q_2|} \bigg)^\alpha,
\end{equation}
where $|E|$ means the diameter of $E$ and $C, \alpha, \beta$ are positive constant which only depend on $\mu$. 
\end{lem}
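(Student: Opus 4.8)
The plan is to derive both inequalities from an iteration of the doubling condition, after first transferring that condition from balls to cubes. Since in $\R^d$ a cube of side $s$ is trapped between concentric balls of radii $s/2$ and $s\sqrt{d}/2$, the ball-doubling property of $\mu$ immediately yields a cube-doubling property: there is a constant $C_1=C_1(\mu,d)\geq 1$ with $\mu(2Q)\leq C_1\mu(Q)$ for every cube $Q$, where $2Q$ is the cube concentric with $Q$ of twice the side length; iterating gives $\mu(2^nQ)\leq C_1^n\mu(Q)$. Because diameters and side lengths of cubes are proportional, it suffices to prove the two estimates with $|Q_i|$ replaced by the side lengths $s_i$, and we may assume $s_1\leq s_2$ since $Q_1\subset Q_2$. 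The lower bound will come from running doubling forward, and the upper bound from a reverse doubling estimate, which is the heart of the matter.

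For the lower bound I would run the doubling inequality forward. Let $n$ be the least integer with $2^ns_1\geq 3s_2$, and let $2^nQ_1$ be the cube concentric with $Q_1$ of side $2^ns_1$. As the center of $Q_1$ lies in $Q_2$ and $2^ns_1\geq 3s_2$, we have $Q_2\subset 2^nQ_1$, so that
\[
\mu(Q_2)\leq \mu(2^nQ_1)\leq C_1^{\,n}\mu(Q_1).
\]
Since $2^n\leq 6\,s_2/s_1$, the factor $C_1^{\,n}$ is bounded by a constant times $(s_2/s_1)^{\beta}$ with $\beta=\log_2 C_1$, and rearranging gives exactly the claimed bound $\mu(Q_1)/\mu(Q_2)\geq C^{-1}(s_1/s_2)^{\beta}$.

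The upper bound is the substantive half and rests on a reverse doubling estimate, where I expect the real work to lie. The target is a constant $\delta\in(0,1)$ with $(1+\delta)\mu(Q)\leq\mu(3Q)$ for every cube $Q$, where $3Q$ is the concentric cube of three times the side length. I would first establish that equal, neighboring cubes have comparable measure: if $Q'$ is a translate of $Q$ contained in $3Q$, then $Q\cup Q'$ lies in a single ball of radius $\sim s$ while $Q'$ contains a ball of radius $\sim s$, so ball-doubling gives $\mu(Q)\leq c_0\,\mu(Q')$ with $c_0=c_0(\mu,d)$. Placing such a $Q'$ disjoint from $Q$ in a corner of $3Q$ and using additivity,
\[
\mu(3Q)\geq \mu(Q)+\mu(Q')\geq \mu(Q)+c_0^{-1}\mu(Q)=(1+c_0^{-1})\mu(Q),
\]
which is the desired reverse doubling with $\delta=c_0^{-1}$. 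The only nuisance is that the corner cube may meet $Q$ in a face of possibly positive measure; this is handled by separating $Q'$ from $Q$ by a small gap, at the cost of enlarging $c_0$ by a dimensional factor.

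Finally I would iterate reverse doubling downward along cubes concentric with $Q_1$. Let $m$ be the least integer with $3^ms_1\geq 3s_2$ and let $R_j$ be concentric with $Q_1$ of side $3^{\,m-j}s_1$, so that $R_m=Q_1$, $R_{j-1}=3R_j$, and $R_0\supset Q_2$ with side comparable to $s_2$. Reverse doubling yields $\mu(R_{j-1})\geq(1+\delta)\mu(R_j)$, hence $\mu(R_0)\geq(1+\delta)^m\mu(Q_1)$, while finitely many applications of cube-doubling give $\mu(R_0)\leq C'\mu(Q_2)$. Combining and using $3^m\approx s_2/s_1$ produces $\mu(Q_1)\leq C'(1+\delta)^{-m}\mu(Q_2)$, i.e. the upper bound with $\alpha=\log_3(1+\delta)>0$. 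The main obstacle is the reverse doubling step, and within it the comparability of measures of equal neighboring cubes; once that is secured, disjointness inside $3Q$ forces genuine growth and the positivity of $\alpha$ follows, the rest being bookkeeping with the exponents.
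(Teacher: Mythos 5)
Your proof is correct. Note that the paper itself does not prove this lemma at all: it is quoted as a known estimate with a pointer to Heinonen's book (Chapter 13) and Wu's paper, so there is no in-paper argument to compare against. What you have written is essentially the standard proof from those references: iterated cube-doubling (after transferring ball-doubling to cubes) gives the lower bound with $\beta=\log_2 C_1$, and the reverse-doubling inequality $\mu(3Q)\geq(1+\delta)\mu(Q)$ --- obtained by placing a disjoint comparable cube inside $3Q$ and using additivity --- gives the upper bound with $\alpha=\log_3(1+\delta)$. One remark in your favor: the worry about the corner cube touching $Q$ is genuine and your gap fix is the right one, since a doubling measure on $\R^d$ can charge lower-dimensional sets (indeed this paper cites Garnett--Killip--Schul for a doubling measure charging a rectifiable curve), so one cannot simply dismiss the overlap of closed cubes as null without an argument; separating $Q'$ from $Q$ by a definite gap, at the price of a dimensional constant in $c_0$, settles it cleanly.
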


Lemma \ref{lemma1} implies that  every subset $E$ of  $\R^d$ with Hausdorff dimension zero is thin for doubling measures (the same argument as mass distribution principle, see \cite[Chapter 4]{f}). Various examples of  thin sets for doubling measures relate to the concept of porosity,  see \cite{hww, ojala,ors, pw, wu1}. Doubling measures give zero weight to any smooth hyper-surface, see \cite[p.40]{s}. But for every $d \geq 2$, there exist rectifiable curve in $\R^{d}$ which is not  thin, see \cite{g}. In \cite{www1} the authors asked that: \emph{Is the graph of continuous function thin for doubling measures ?} This question was negatived answered in \cite{tt}. We will show that rectifiable curves  and  graphs of continuous function are  thin  for  isotropic doubling measures. The following definition is from \cite{kmw}.
\begin{defi}
A Borel measure $\mu$ on $\R^{d}$ is isotropic doubling if there
is a constant $A \geq 1$ such that
\[
A^{-1} \leq \frac{\mu(R_{1})}{\mu(R_{2})} \leq A,
\]
whenever $R_{1}$ and $R_{2}$ are congruent rectangular boxes with
nonempty intersection. 
\end{defi}
We denote by $\ID(\R^d)$ all isotropic doubling measures on $\R^d$. Isotropic doubling measures arise from the study of \emph{$\delta$-monotone mappings}. We refer to \cite{kmw} for more details about isotropic doubling measures and $\delta$-monotone mappings. In \cite{kmw} they proved that isotropic doubling measures are  absolutely continuous to $\mathcal{H}^{d-1}$ (\emph{Hausdorff measure}) and  for every $d\geq 2$, there exists an isotropic doubling measure on $\R^d$ which is singular with respect to the Lebesgue measure. The following question of \cite{kmw}
arises naturally. \emph{Is it true that every isotropic doubling measure on
$\R^{d}, d\geq 2$, is absolutely continuous with respect to the
$s$-dimensional Hausdorff measure for all $s < d$?} This question was one of the motivations of this work. We don't know the answer. However by applying a similar estimate as Lemma \ref{lemma1} to isotropic doubling measures, we get the following result.
\begin{pro}\label{thm1}
Let $E \subset \R^{d}$ with $\dim_H E\leq d-1$, then $E$ is thin for  isotropic doubling measures on $\R^{d}$.
\end{pro}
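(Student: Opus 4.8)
The plan is to fix an arbitrary $\mu \in \ID(\R^d)$ and show $\mu(E)=0$; since thinness is a local and countably additive property, I would first intersect $E$ with a fixed large cube $Q_0$ and assume $\mu(Q_0)<\infty$, so that it suffices to bound $\mu(E\cap Q_0)$. The engine of the argument will be an isotropic analogue of Lemma \ref{lemma1}: for cubes $Q_1\subset Q_2$ I want an estimate of the form $\mu(Q_1)/\mu(Q_2)\le C(|Q_1|/|Q_2|)^{s}$ in which the exponent $s$ is governed by $d$ and the isotropic constant $A$, rather than merely by a doubling constant.

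The key geometric observation driving this estimate is that the $2^d$ dyadic children of a cube $Q$ all contain the center of $Q$, hence pairwise intersect; being congruent, the isotropic hypothesis forces their $\mu$-measures to be pairwise comparable with factor $A$. Since $\mu$ charges no coordinate hyperplane, summing over the children gives $\max_i \mu(q_i)\le A\,2^{-d}\mu(Q)$, and iterating across dyadic scales yields, for every dyadic subcube $q$ of $Q$ of relative side $2^{-k}$, the uniform bound $\mu(q)\le (A2^{-d})^{k}\mu(Q)$, that is, a H\"older estimate with exponent $d-\log_2 A$. First I would record this crude version, and then attempt to refine the exponent toward $d-1$ by comparing, instead of single cubes, the congruent thin slabs into which $Q$ is cut by equally spaced hyperplanes; such slabs meet in full faces, and playing the isotropic comparison of slabs against the subdivision of each slab into cubes is where the $(d-1)$-dimensional scaling should emerge.

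With the estimate in hand I would run a standard covering/mass-distribution argument: given $s'>\dim_H E$ with $s'\le s$, cover $E\cap Q_0$ by cubes $\{q_i\}$ with $\sum_i|q_i|^{s'}$ arbitrarily small, whence $\mu(E\cap Q_0)\le\sum_i\mu(q_i)\le C|Q_0|^{-s}\mu(Q_0)\sum_i|q_i|^{s}\le\varepsilon$ as soon as the estimate's exponent dominates the covering exponent. This immediately disposes of the strict case $\dim_H E<d-1$; the same case may alternatively be read off from the known absolute continuity of $\mu$ with respect to $\H^{d-1}$, since then $\H^{d-1}(E)=0$ forces $\mu(E)=0$.

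The hard part will be the boundary case $\dim_H E=d-1$, where $\H^{d-1}(E)$ may be positive and the two soft inputs above are insufficient; indeed a flat piece of hyperplane has $\dim_H=d-1$ and positive $\H^{d-1}$-measure, yet must receive $\mu$-measure zero. Here I expect to need the isotropic estimate with exponent strictly exceeding $d-1$ (equivalently, that $\mu$ of a small cube decays faster than the critical rate $|q|^{d-1}$), so that the covering argument still closes when $s'$ is taken just above $d-1$. Securing such a supercritical exponent, or otherwise ruling out concentration of $\mu$ on a $(d-1)$-dimensional set of positive $\H^{d-1}$-measure, is precisely the step that exploits the full isotropic hypothesis rather than plain doubling, and it is the crux of the proof.
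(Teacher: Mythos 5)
Your overall strategy coincides with the paper's: prove a H\"older estimate $\mu(q)\lesssim|q|^{s}$ for cubes with a supercritical exponent $s>d-1$, then conclude by a covering argument. But the proposal stops exactly where the difficulty lies, and the mechanism you gesture at cannot supply what is needed. The crude dyadic-children bound gives exponent $d-\log_2 A$, which is vacuous as soon as $A\geq 2$. The slab refinement, even carried out in full, plateaus at exponent exactly $d-1$: slicing a cube into $n$ congruent slabs in any of the first $d-1$ coordinate directions does work, because every slice retains full extent in the remaining direction, so any two slices are congruent boxes whose mutual distance is at most their diameter, and Lemma \ref{isotropic}(i) compares them with a constant (roughly $A^{2}$) independent of $n$; this yields $\mu(I\times[0,1])\asymp|I|^{d-1}\mu([0,1]^d)$ for a $(d-1)$-cube $I$. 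But the same argument in the last direction fails: two small cubes in a thin column can be at distance $n$ times their diameter, so the chaining constant is $A^{O(n)}$ and the estimate collapses. Hence congruence-chaining alone cannot produce any exponent beyond $d-1$, and you yourself concede that securing the supercritical exponent is the crux --- that is, the decisive step is left unproven, while the example of a flat piece of hyperplane shows the case $\dim_H E=d-1$ genuinely requires it.

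The missing idea (Lemma \ref{key lemma} of the paper) is that the extra gain comes not from isotropy but from \emph{one-dimensional doubling in the fiber direction}. Fix a $(d-1)$-cube $I$ and set $\nu(E):=\mu(I\times E)$ for $E\subset[0,1]$; then $\nu$ is a doubling measure on $[0,1]$ with doubling constant controlled by $A$, so the one-dimensional estimate of Lemma \ref{lemma1} gives $\nu(J)\leq C|J|^{\alpha}\nu([0,1])$ with $\alpha>0$ depending only on $A$. Combining this with $\nu([0,1])\asymp|I|^{d-1}$ --- which the paper obtains from Lemma \ref{isotropic}(ii), and which your slab argument could also deliver --- yields $\mu(I\times J)\leq C|I|^{d-1}|J|^{\alpha}$, hence $\mu(q)\lesssim|q|^{d-1+\alpha}$ for cubes, i.e.\ $\mu\ll\H^{d-1+\alpha}$ (Corollary \ref{holder}). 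Since $\dim_H E\leq d-1<d-1+\alpha$ forces $\H^{d-1+\alpha}(E)=0$, this closes precisely the boundary case that your proposal leaves open.
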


Motivated by the  above question of \cite{kmw}, we consider the \emph{self-affine sets}. By
adding the condition $OSCH$ (see Definition \ref{OSCH}) on self-affine sets, we have the following result.

\begin{thm}\label{self-affine}
A self-affine set that satisfies $OSCH$ is thin for isotropic doubling measures.
\end{thm}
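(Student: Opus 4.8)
The plan is to exploit the ``hole'' guaranteed by $OSCH$ together with the orientation-blindness of isotropic doubling in order to show that, at every stage of the self-affine construction, a definite fixed proportion of the mass sits in a region that is discarded at the next stage; a geometric-decay argument then forces $\mu(E)=0$. Write the attractor as $E=\bigcup_i f_i(E)$ for the affine maps $f_i(x)=A_ix+b_i$, and let $U$ be the open set and $H\subset U$ the hole furnished by Definition \ref{OSCH}, so that the images $f_i(U)$ are pairwise disjoint subsets of $U$ and $H$ is an open set disjoint from all of them. For a word $\omega=i_1\cdots i_n$ put $f_\omega=f_{i_1}\circ\cdots\circ f_{i_n}$ and let $A_\omega$ denote its linear part. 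Since $f_\omega(U)\setminus\bigcup_i f_{\omega i}(U)\supset f_\omega(H)$, everything reduces to the following uniform estimate: there is $\theta\in(0,1)$, depending only on the IFS and on the isotropic doubling constant $A$, such that
\[
\mu\bigl(f_\omega(H)\bigr)\ \ge\ \theta\,\mu\bigl(f_\omega(U)\bigr)\qquad\text{for every word }\omega.
\]

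To prove this estimate I would fix a ball $B(x_0,r_0)\subset H$ and a ball $B(y_0,R_0)\supset U$. Applying the affine map $f_\omega$ turns these balls into ellipsoids with the same orientation and the same aspect ratios, namely those prescribed by the singular directions and singular values of $A_\omega$; crucially the two ellipsoids differ only by the fixed scaling factor $R_0/r_0$, independently of $\omega$. I would then inscribe a box $B_{\mathrm{small}}$ in the small ellipsoid $f_\omega(B(x_0,r_0))\subset f_\omega(H)$ and circumscribe a box $B_{\mathrm{big}}$ about the large ellipsoid $f_\omega(B(y_0,R_0))\supset f_\omega(U)$. A direct computation shows that $B_{\mathrm{small}}$ and $B_{\mathrm{big}}$ have identical orientation and that their side lengths are in the same ratio $m:=\sqrt{d}\,R_0/r_0$ in every coordinate direction, so $B_{\mathrm{big}}$ is a genuine scaled copy of $B_{\mathrm{small}}$ by the $\omega$-independent factor $m$.

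The point of passing to boxes is that the hypothesis is stated for congruent boxes. I would cover $B_{\mathrm{big}}$ by a grid of $N:=\lceil m\rceil^{d}$ translates of $B_{\mathrm{small}}$, all congruent to one another and to $B_{\mathrm{small}}$, arranged so that grid-adjacent boxes share a face and hence meet. Since $B_{\mathrm{small}}\subset B_{\mathrm{big}}$, it meets some grid box, and any grid box can be joined to it by a chain of at most $N$ pairwise-meeting congruent boxes. Applying the isotropic doubling inequality along each chain yields $\mu(T)\le A^{N+1}\mu(B_{\mathrm{small}})$ for every grid box $T$, whence
\[
\mu\bigl(f_\omega(U)\bigr)\le\mu(B_{\mathrm{big}})\le\sum_{T}\mu(T)\le N\,A^{N+1}\,\mu(B_{\mathrm{small}})\le N\,A^{N+1}\,\mu\bigl(f_\omega(H)\bigr),
\]
so one may take $\theta=(N A^{N+1})^{-1}$. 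Note that $N$, $A$, and hence $\theta$ do not depend on $\omega$, even though $A_\omega$ may become arbitrarily anisotropic as $n\to\infty$: this is exactly where isotropy is used, since the doubling inequality is applied to congruent boxes of completely uncontrolled orientation and shape, with one universal constant $A$. A merely doubling measure would give no such orientation-uniform bound, which is why the theorem is phrased for isotropic doubling measures.

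Finally I would close the argument by summing the estimate over a level. Using that the $f_{\omega i}(U)$ are disjoint in $i$ and that $f_\omega(H)$ is disjoint from $\bigcup_i f_{\omega i}(U)$, the key estimate gives $\sum_i\mu(f_{\omega i}(U))\le(1-\theta)\mu(f_\omega(U))$, and summing over all words $\omega$ of length $n$ shows that $a_n:=\sum_{|\omega|=n}\mu(f_\omega(U))$ satisfies $a_{n+1}\le(1-\theta)a_n$, so that $a_n\le(1-\theta)^n\mu(U)\to0$. Since $E\subset\bigcup_{|\omega|=n}f_\omega(\overline U)$ for every $n$, a routine passage from the open cells to their closures then gives $\mu(E)=0$. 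The main obstacle is the uniform estimate for $\theta$, and in particular guaranteeing that the covering number $N$ and the chain length stay bounded despite the unbounded anisotropy of $A_\omega$; the remaining steps are bookkeeping. I expect the only further delicate point to be the harmless discrepancy between $f_\omega(U)$ and $f_\omega(\overline U)$, which is handled by a standard limiting argument using the nested structure of the construction.
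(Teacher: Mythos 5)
Your proposal is correct and takes essentially the same approach as the paper: in both arguments the heart is a uniform, $\omega$-independent comparison $\mu(f_\omega(\mathrm{hole}))\ge\theta\,\mu(f_\omega(\mathrm{cell}))$ obtained by aligning congruent rectangular boxes with the principal stretch directions of the linear part of $f_\omega$ and chaining the isotropic doubling inequality along intersecting congruent boxes (you do this via the SVD/ellipsoid picture; the paper does it via polar decomposition in Lemma \ref{dia} and Proposition \ref{fu}), which is precisely where isotropy is needed. The remaining differences are bookkeeping --- you derive geometric decay $a_{n+1}\le(1-\theta)a_n$ level by level, while the paper sums the pairwise disjoint holes $f_\sigma(Q)$, $\sigma\in S^*$, over all levels inside a fixed compact cube --- and your final passage to closures is indeed routine, since your circumscribed box $B_{\mathrm{big}}$ already contains $f_\omega(\overline U)$ and is controlled by the same estimate.
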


For the doubling measures, things become more complicated. So we consider a special class of \emph{self-affine carpets} on the plane. Bara\'nski \cite{bar} generalized the construction of Bedford-McMullen carpets to build a class of self-affine carpets. We call them Bara\'nski carpets, see Definition \ref{bb} or \cite{bar}.  For Bedford-McMullen carpets, see \cite{b, f, mc}. For Bara\'nski carpets, we have the following result.


\begin{thm}\label{carpets}
Bara\'nski carpets are thin for doubling measures.
\end{thm}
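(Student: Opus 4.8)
The plan is to argue by contradiction using the Lebesgue density theorem, which is available because every $\mu\in\D(\R^2)$ is doubling and hence satisfies the Besicovitch differentiation theorem. Let $F$ be a Bara\'nski carpet generated by the grid and digit set $\D$ of Definition \ref{bb}, and suppose $\mu(F)>0$ for some doubling $\mu$. Since $F$ is a \emph{proper} subset of the unit square (otherwise it carries positive Lebesgue measure and the assertion is vacuous), the digit set $\D$ omits at least one cell of the defining grid. The whole argument is organized around the \emph{approximate squares} of $F$. In the uniformly subdivided case, writing $1/n$ and $1/m$ (say $n>m$) for the horizontal and vertical contraction ratios, the level-$k$ approximate square $Q_k(x)\ni x$ is obtained by fixing the first $k$ vertical digits and the first $l=\lfloor k\log m/\log n\rfloor$ horizontal digits of $x$, so that $Q_k(x)$ has width $n^{-l}$ and height $m^{-k}$ comparable up to the factor $\max(n,m)$; the general case is handled the same way with the two partition ratios. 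By doubling these boxes are comparable to genuine cubes, so Lemma \ref{lemma1} applies to them and the density theorem gives $\mu(F\cap Q_k(x))/\mu(Q_k(x))\to 1$ for $\mu$-a.e.\ $x\in F$. Fix such a density point $x$.

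I would then seek, at infinitely many levels $k$, a sub-region $H_k\subset Q_k(x)$ with $H_k\cap F=\varnothing$ and $\mu(H_k)\geq\delta\,\mu(Q_k(x))$ for a constant $\delta>0$ independent of $k$; this forces $\mu(F\cap Q_k(x))/\mu(Q_k(x))\leq 1-\delta$ infinitely often, contradicting the density $1$ above and hence proving $\mu(F)=0$. The candidate for $H_k$ is a \emph{forbidden} child cell of $Q_k(x)$. At those levels where passing from $k$ to $k+1$ advances \emph{both} a horizontal and a vertical digit --- which happens with positive frequency governed by $\log m/\log n$ --- the natural children of $Q_k(x)$ are sub-rectangles of width $\asymp\rho_k/n$ and height $\asymp\rho_k/m$, hence of bounded aspect ratio and relative size bounded below. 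Any such child corresponding to an omitted digit pair is disjoint from $F$, is comparable to a cube of relative side $\geq c$, and therefore by the lower bound in Lemma \ref{lemma1} satisfies $\mu(H_k)\geq C^{-1}c^{\beta}\mu(Q_k(x))$. This is exactly the definite-measure hole required, and it is the place where the \emph{lower} estimate of Lemma \ref{lemma1} is essential: a naive cover-and-sum argument using only the upper estimate cannot see the positive dimension of $F$.

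The main obstacle is to guarantee that such a forbidden roughly-square child occurs at infinitely many levels along the chain of \emph{every} density point, rather than merely globally. A both-advance step only exposes an omitted pair $(i^{*},j^{*})$ as an empty child when the already-fixed vertical digit entering that step equals $j^{*}$, and a given point may avoid the row $j^{*}$ altogether. I expect to resolve this by a frequency argument: the set of points whose digit sequences fail to realize any forbidden configuration with positive frequency is itself a Bara\'nski carpet built on a strictly smaller digit set, to which the same reasoning applies by induction on $\#\D$. The base case is a carpet possessing an entirely empty row or column, which is porous and hence thin by the porosity criterion cited in the introduction, so thinness propagates to the exceptional set and the induction closes.

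The technical heart, and where I anticipate the real difficulty, is the two-scale bookkeeping: the horizontal and vertical digits advance at the coupled rates dictated by the (possibly irrational) ratio $\log m/\log n$, so one must track precisely at which levels a both-advance step occurs, verify the bounded aspect ratio of the resulting children uniformly in $k$, and confirm that the geometric hole is genuinely comparable to a cube before invoking Lemma \ref{lemma1}. Carrying out the frequency/induction step while keeping the constant $\delta$ uniform across the descending sequence of sub-carpets is the most delicate point, and I would organize the estimates so that $\delta$ depends only on $n,m,\#\D$ and the doubling data $C,\beta$ of $\mu$, not on the level $k$ or on the particular density point.
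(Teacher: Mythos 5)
Your core mechanism is sound and genuinely different from the paper's. You work locally: Lebesgue density points of $\mu$ on $F$, plus roughly-square holes of definite relative measure inside approximate squares at infinitely many scales, with Lemma \ref{lemma1} supplying the lower bound. The paper works globally: it never invokes density points, but instead pushes the single first-level hole $Q$ (a cube with $Q\cap E_1=\emptyset$) into \emph{every} cylinder via the maps $f_{i,j}$, proves the anisotropic comparison $\mu(f(Q))\geq C\mu(f(V_Q))$ by a chain-of-balls argument (Lemma \ref{dd}), assembles these through a Moran cover into $\mu(G(R))\geq C_1C_2\,\mu(R)$ for every level-$n$ rectangle $R$, and finishes by summing the measures of the pairwise disjoint hole collections $G_{n_k}$ over a sparse sequence of levels. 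The trade-off is instructive: the paper's holes exist in every cylinder but are highly eccentric (hence Lemma \ref{dd} rather than Lemma \ref{lemma1}); your holes are roughly square but are not available along every branch, which is exactly what forces your exceptional-set analysis --- and that is where your argument has a genuine gap.

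The inductive step is not correct as stated for Bara\'nski carpets (it is essentially right only in the Bedford--McMullen case, where all cells have the same orientation). In the Bara\'nski setting the widths $w_s=a_{i_1}\cdots a_{i_s}$ and heights $h_s=b_{j_1}\cdots b_{j_s}$ are point-dependent, and which test applies at position $s$ --- ``row $j_s(x)$ non-full'' versus ``column $i_s(x)$ non-full'' --- is governed by the sign of $w_s-h_s$, i.e.\ by the entire digit history, not by the digit at position $s$. Hence the set of points that never see a hole is \emph{not} a carpet on a smaller digit set. Concretely, take $p=q=3$, the digit set $\D$ equal to all nine cells except the center, and non-uniform ratios (say $a_i=1/3$, $b_1=1/2$, $b_2=b_3=1/4$): every cell of $\D$ lies in a full row or a full column, so the natural reduced digit set equals $\D$ and your induction does not move. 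The repair needs an idea absent from your sketch: whenever the orientation flips (or $w_s=h_s$), there is a scale at which the horizontal and vertical refinements synchronize, and at such a scale \emph{any} omitted cell of $\D$ provides a hole, with no row/column hypothesis. Only with this observation can one conclude that every surviving ``bad'' point has eventually constant orientation, so that the bad set is covered by \emph{countably many affine copies} (not ``is'') of the two reduced carpets, on $\D\cap\{\text{full rows}\}$ and $\D\cap\{\text{full columns}\}$; one must also check that pulling back a doubling measure by the fixed affine cylinder maps preserves doubling, so the induction hypothesis applies to each copy. With that synchronization lemma and the corrected covering statement your induction closes (your terminal case --- an entirely empty row or column, hence porosity --- is fine, and ``infinitely often'' suffices in place of ``positive frequency''); without it, the argument fails precisely for the carpets the theorem is about.
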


\bigskip
\noindent\textbf{Acknowledgement:} 
We would like to thank Tuomo Ojala and Ville Suomala for reading the
manuscript and giving many helpful comments. 

\section{Isotropic doubling measures}

We start from a useful lemma of \cite{kmw}.
\begin{lem}\label{isotropic}
Let $\mu$ be a isotropic doubling measure on $\R^{d},
d\geq2$ with doubling constant $A$. Then: (i) For any congruent rectangular boxes $R_{1}, R_{2} \subset
\mathbb{R}^{d}.$
\begin{equation}
A^{-m} \leq \frac{\mu(R_{1})}{\mu(R_{2})} \leq A^{m},
\end{equation}
where $m = [\frac{dist(R_{1},R_{2})}{diam R_{1}}]+1.$

(ii) Let $Q \subset \mathbb{R}^{d}$ be a cube, and let $F$ be a face
of $Q$. The pushforward $\pi_{\sharp}\mu_{Q}$ of $\mu_{Q}$ under the
orthogonal projection $\pi : Q \rightarrow F$ is comparable to
$\mathcal{L}^{d-1}_{F}$ with constants that depend only on $d$ and $A$.
\end{lem}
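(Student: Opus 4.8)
The plan is to derive both parts from the single-scale comparison built into the definition of isotropic doubling by chaining together overlapping congruent boxes, and then to read off (ii) as a corollary of (i).

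For (i), I would first reduce the statement to a purely geometric claim: given congruent rectangular boxes $R_1,R_2$ with $\delta=\operatorname{dist}(R_1,R_2)$ and $D=\operatorname{diam}R_1=\operatorname{diam}R_2$, there is a finite chain $R_1=S_0,S_1,\dots,S_k=R_2$ of mutually congruent boxes with $S_{j-1}\cap S_j\neq\emptyset$ for each $j$ and $k\le m=[\delta/D]+1$. Granting this, the definition applied to each adjacent pair gives $A^{-1}\le\mu(S_{j-1})/\mu(S_j)\le A$, and multiplying these $k\le m$ ratios telescopes to $A^{-m}\le\mu(R_1)/\mu(R_2)\le A^{m}$, which is exactly (i).

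To build the chain I would march from $R_1$ to $R_2$ along the direction $u=(q-p)/\delta$, where $p\in R_1$, $q\in R_2$ realize the distance $\delta$. The one geometric fact that makes the step count sharp is that, along the direction of a longest diagonal, a box has extent exactly equal to its diameter $D$; hence if an intermediate congruent box is reoriented so that such a diagonal points along $u$, then translating it by any vector of length $<D$ along $u$ produces a box still meeting the previous one. Thus a box can advance a full distance close to $D$ at each step, so about $\delta/D$ translation steps carry $R_1$ across to $R_2$, to which we add a bounded number of \emph{rotation} steps that match orientations at the two ends (a rotation about a common point keeps the two boxes intersecting). I expect the main obstacle to be precisely this construction: for a very elongated or rotated box a \emph{naive} coordinate translation needs far more than $\delta/D$ overlapping steps, and it is the diagonal-alignment observation, together with careful bookkeeping at the two endpoints, that pins the count down to $m$.

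For (ii) I would reduce to (i). Normalising $\mu_Q=\mu|_Q/\mu(Q)$, the pushforward $\nu:=\pi_\sharp\mu_Q$ is a probability measure on $F$ with $\nu(E)=\mu_Q\big(\pi^{-1}(E)\cap Q\big)$ for $E\subset F$. For equal sub-squares $E_1,E_2\subset F$ the columns $\pi^{-1}(E_i)\cap Q$ are congruent boxes that span the full height of $Q$; their mutual distance is at most $\operatorname{diam}F\le\sqrt{d-1}\,s$ while each has diameter at least the side length $s$ of $Q$, so (i) applies with $m\le[\sqrt{d-1}]+1$, a dimensional constant. This gives $\nu(E_1)\asymp\nu(E_2)$ with a constant depending only on $d$ and $A$. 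Finally, partitioning $F$ at scale $r$ into congruent sub-squares, all of which then carry comparable $\nu$-mass summing to $\nu(F)=1$, forces $\nu(E)\asymp\mathcal{L}^{d-1}(E)/\mathcal{L}^{d-1}(F)$ uniformly in $r$; letting $r\to0$ and applying a standard differentiation argument produces a density $d\nu/d\mathcal{L}^{d-1}$ bounded above and below by constants depending only on $d$ and $A$, which is the asserted comparability. The substantive step is (i); part (ii) is then a short deduction.
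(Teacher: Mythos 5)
First, a point of comparison: the paper does not prove this lemma at all --- it is quoted from the reference [kmw] (Kovalev--Maldonado--Wu), so there is no internal proof to measure you against. Your strategy (chaining congruent boxes for (i), then deducing (ii) by comparing the columns over congruent sub-squares of the face $F$) is exactly the standard argument behind the quoted result. Your part (ii) is a correct and complete deduction: the columns are congruent boxes whose mutual distance is at most $\sqrt{d-1}$ times their diameter, so (i) with a dimensional exponent makes all grid squares of $F$ carry comparable $\nu$-mass; summing over the grid and a routine covering/regularity argument then gives comparability with the (normalized) measure $\mathcal{L}^{d-1}_F$.

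There is, however, a genuine gap in your part (i): the chain-count claim is geometrically false as stated. If $0<\operatorname{dist}(R_1,R_2)<\operatorname{diam}R_1$, then $m=[\delta/D]+1=1$ with the integer-part reading of the bracket, yet $R_1\cap R_2=\emptyset$, so no chain with a single adjacent pair exists. More generally, if $S_0,\dots,S_k$ all have diameter $D$ and $S_{j-1}\cap S_j\neq\emptyset$, then choosing $x_j\in S_j\cap S_{j+1}$ gives $\operatorname{dist}(S_0,S_k)\le\sum_{j=1}^{k-1}|x_{j-1}-x_j|\le (k-1)D$, so every chain has at least $\lceil\delta/D\rceil+1$ links. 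Hence no amount of bookkeeping can ``pin the count down to $m$'' when $\delta/D\notin\mathbb{Z}$: what chaining proves is the inequality with exponent $\lceil\delta/D\rceil+1$, i.e.\ $m+1$ in the floor convention. (Your construction, incidentally, is worse than this optimum: the two rotation steps at the endpoints cost two extra links. They can be avoided entirely: take $p\in R_1$, $q\in R_2$ with $|p-q|=\delta$, cut the segment $[p,q]$ into $\lceil\delta/D\rceil$ pieces of length at most $D$, and let each intermediate box contain one piece along its main diagonal; consecutive boxes share the cut points, the first contains $p$ and the last contains $q$.) This off-by-one is harmless for everything in the paper --- your own deduction of (ii), and Lemma 2.3 of the paper, only need (i) with an exponent that is a dimensional constant --- but if the bracket in the statement is the integer part, your argument does not prove the stated bound, and I do not see how to: a one-step comparison between disjoint congruent boxes is impossible even with monotonicity (congruence forces equal volume, so a congruent pair $P_1\supseteq R_1$, $P_2\subseteq R_2$ would force $P_1=R_1$ and $P_2=R_2$, which do not intersect). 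You should either prove the ceiling version explicitly, or flag that the floor-form statement is not what the chaining argument yields.
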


The following is an analogue of Lemma \ref{lemma1} for isotropic doubling measures.
\begin{lem} \label{key lemma}
Let  $\mu \in \ID([0, 1]^{d}), d\geq 2$ with the doubling constant $A$ and $I$ be a cube in $[0,1]^{d-1}$, and $J$ be interval in $[0,1]$. Then 
\begin{equation}\label{main use}
C^{-1} |I|^{d-1} |J|^\beta\leq\mu(I\times J) \leq C |I|^{d-1}|J|^{\alpha},
\end{equation}
where $\alpha,\beta$ and  $C$ are positive constants depending only  on $d$ and $A$. 
\end{lem}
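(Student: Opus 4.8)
The plan is to establish the two inequalities in \eqref{main use} separately, reducing the estimate on the product set $I \times J$ to the behaviour of $\mu$ under the orthogonal projection onto the face $[0,1]^{d-1} \times \{0\}$, where part (ii) of Lemma \ref{isotropic} gives us control. Write $\pi : [0,1]^d \to [0,1]^{d-1}$ for the projection that forgets the last coordinate. The guiding intuition is that the $(d-1)$-dimensional size of $I$ should contribute the clean factor $|I|^{d-1}$ (reflecting that isotropic doubling measures project to something comparable to Lebesgue measure on faces), while the one-dimensional interval $J$ should contribute the polynomial factors $|J|^\alpha$ and $|J|^\beta$ coming from an iteration argument in the last coordinate, exactly as in the proof of Lemma \ref{lemma1}.

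First I would prove the upper bound. The idea is to slice $I \times J$ into a stack of roughly $|J|/|I|$ congruent boxes each of the shape $I \times J'$ with $|J'| \approx |I|$, i.e. essentially cubes of sidelength comparable to $|I|$. By part (i) of Lemma \ref{isotropic} these congruent boxes have comparable $\mu$-measure up to a factor controlled by $A$, so summing gives $\mu(I \times J) \lesssim (|J|/|I|)\,\mu(I \times I')$ for a representative cube $I \times I'$. It then remains to bound $\mu$ of a cube of sidelength $|I|$ sitting inside $[0,1]^{d-1} \times J$. Here I would invoke part (ii): projecting the unit cube (or a suitable large cube) onto the face shows that the $\mu$-mass lying over the base cube $I$ is comparable to $|I|^{d-1}$, and then an iterated doubling argument in the vertical direction — identical in spirit to Lemma \ref{lemma1} applied to the nested cubes over $I$ — produces the extra factor $|J|^\alpha$. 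Combining, one obtains $\mu(I \times J) \le C |I|^{d-1} |J|^\alpha$ after absorbing the crude factor $|J|/|I|$ into the exponent, which is why the exponent $\alpha$ ends up strictly less than $1$.

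For the lower bound I would run the mirror-image argument: the same vertical iteration, now using the lower estimate in Lemma \ref{lemma1}-type reasoning, yields a factor $|J|^\beta$ from below, while part (ii) again guarantees that the horizontal mass over $I$ is bounded below by a constant multiple of $|I|^{d-1}$. The key mechanism in both directions is that congruence of boxes lets part (i) convert ``same shape, different location'' into bounded ratios, and part (ii) converts ``horizontal extent'' into genuine $(d-1)$-dimensional Lebesgue behaviour, decoupling the $I$-dependence from the $J$-dependence.

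The main obstacle I anticipate is the careful bookkeeping needed to decouple the two factors cleanly, that is, to show the $I$-dependence is exactly $|I|^{d-1}$ with no residual $|J|$-dependence hidden in the constant, and vice versa. Concretely, the delicate point is handling the regime where $|J|$ and $|I|$ are of very different scales: when $|J| \gg |I|$ the vertical stacking of congruent sub-boxes must be done so that the accumulated constant from repeatedly applying part (i) is correctly captured by the polynomial factor $|J|^\alpha$ (resp.\ $|J|^\beta$) rather than degenerating. Ensuring that the comparison to the projected measure via part (ii) is uniform in the position and scale of $I$ — so that the constant $C$ depends only on $d$ and $A$ as claimed — is where I expect most of the technical effort to go.
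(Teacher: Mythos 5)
Your guiding intuition — decouple the horizontal factor $|I|^{d-1}$ via Lemma \ref{isotropic}(ii) and the vertical factors $|J|^{\alpha}$, $|J|^{\beta}$ via a one-dimensional doubling iteration as in Lemma \ref{lemma1} — is exactly the mechanism of the paper's proof. However, the concrete step you use to launch the upper bound fails. You slice $I\times J$ into a stack of roughly $|J|/|I|$ congruent cubes and assert that, by Lemma \ref{isotropic}(i), these have ``comparable $\mu$-measure up to a factor controlled by $A$.'' Part (i) gives no such uniform constant: for two congruent boxes at distance $\mathrm{dist}$, the ratio of masses is bounded only by $A^{m}$ with $m=[\mathrm{dist}/\mathrm{diam}]+1$, so the comparability constant between the bottom and top slices of your stack is of order $A^{|J|/|I|}$, exponentially large in the number of slices; indeed, no constant depending only on $d$ and $A$ can work, since for singular isotropic doubling measures same-size boxes at unit distance can have mass ratios growing like a power of the ratio $\mathrm{dist}/\mathrm{size}$. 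The inequality $\mu(I\times J)\lesssim (|J|/|I|)\,\mu(I\times I')$ does hold trivially if $I\times I'$ is chosen as the slice of \emph{maximal} mass, but then the continuation collapses: the best available bound for a single cube of side $|I|$ is $C|I|^{d-1}|I|^{\alpha_{0}}$, so the total becomes $C|J|\,|I|^{d-2+\alpha_{0}}$, and this is \emph{not} $\lesssim |I|^{d-1}|J|^{\alpha}$ for any $\alpha>0$ (take $|J|=|I|^{\varepsilon}$ with $\varepsilon$ small and $|I|\to 0$). The plan to ``absorb the crude factor $|J|/|I|$ into the exponent'' cannot be realized: linear stacking of congruent boxes produces exponential constants, and only nested, geometric-scale iteration produces the polynomial factors the lemma asserts.

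The repair is to delete the slicing entirely and carry out, cleanly, what you describe only in passing (``iterated doubling in the vertical direction, identical in spirit to Lemma \ref{lemma1}''); this is precisely the paper's proof. Set $\nu(E):=\mu(I\times E)$ for $E\subset[0,1]$. Applying the isotropic doubling condition to the congruent boxes $I\times J_{1}$ and $I\times J_{2}$, for touching intervals $J_{1},J_{2}$ of equal length, shows that $\nu$ is a doubling measure on $[0,1]$ with constant depending only on $A$. Lemma \ref{lemma1} applied to $\nu$ then gives $C_{1}^{-1}|J|^{\beta}\leq \nu(J)/\nu([0,1])\leq C_{1}|J|^{\alpha}$ in one stroke, with no case distinction between $|J|\leq|I|$ and $|J|\geq|I|$ and no accumulation of constants; finally Lemma \ref{isotropic}(ii) identifies $\nu([0,1])=\mu(I\times[0,1])$ as comparable to $|I|^{d-1}$. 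Note that your lower-bound paragraph already is essentially this argument (vertical iteration plus part (ii)), so the genuine gap is confined to the stacking step in your upper bound.
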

\begin{proof}
Let $\nu(E):= \mu(I\times E)$  for $E \subset [0,1]$. Then $\nu$ is a doubling measure on $[0,1]$ with the doubling constant $A$. Applying  
Lemma \ref{lemma1} to $\nu$, we have
\begin{equation}\label{newmeasure}
C_1^{-1}|J|^{\beta}\leq\frac{\nu(J)}{\nu([0,1])}\leq C_1 |J|^{\alpha},
\end{equation}
where $C_1, \alpha$, and $\beta$ are positive constants depending only on $A$. Applying the second part of  Lemma \ref{isotropic}, we see that $\nu([0,1])$ is comparable  to $|I|^{d-1}$ with the constant  depending on $d$ and $A$ only. Thus we  have finished the proof.   
\end{proof}
Applying Lemma \ref{key lemma} and the same argument as mass distribution principle (see \cite[Chapter 4]{f}), we arrive at the following  corollary immediately. 
\begin{cor}\label{holder}
Let  $\mu \in \ID([0,1]^d)$, then there exist a positive constant $\alpha$ which only depends on $\mu$, such that $\mu$ is absolutely continuous to $\H^{d-1+\alpha}$. 
\end{cor}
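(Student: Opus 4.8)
The plan is to deduce the result from the classical mass distribution principle (Frostman's lemma), using only the upper bound in Lemma \ref{key lemma} with $s := d-1+\alpha$, where $\alpha$ is the very constant produced there. In outline: I would first upgrade the box estimate \eqref{main use} to a Frostman-type ball estimate of the form $\mu(B(x,r)) \leq C r^{s}$, and then invoke the mass distribution principle to conclude $\mu \ll \H^{s}$.

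For the first step, fix an axis-aligned cube $Q \subset [0,1]^{d}$ of side length $\ell$ and write $Q = I \times J$, where $I$ is a cube of side $\ell$ in $[0,1]^{d-1}$ and $J$ is an interval of length $\ell$ in $[0,1]$. The diameters $|I|$ and $|J|$ are each comparable to $\ell$ with constants depending only on $d$, so the upper bound in \eqref{main use} yields
\[
\mu(Q) \leq C\,|I|^{d-1}|J|^{\alpha} \leq C'\,\ell^{\,d-1+\alpha},
\]
with $C'$ depending only on $d$ and $A$. Since $|Q|$ is itself comparable to $\ell$, this is precisely $\mu(Q) \leq C''\,|Q|^{\,s}$. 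Passing to balls is then immediate: any ball $B(x,r)$ with $x \in [0,1]^{d}$ meets $[0,1]^{d}$ in a set contained in a cube of side comparable to $r$ (after a harmless shift to keep it inside $[0,1]^{d}$ near the boundary), so $\mu(B(x,r)) \leq C'''\,r^{\,s}$ uniformly in $x$ and $r$.

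Finally, the mass distribution principle (see \cite[Chapter 4]{f}) turns this uniform bound into the inequality $\mu(E) \leq C'''\,\H^{s}(E)$ for every Borel set $E \subset [0,1]^{d}$; hence $\H^{s}(E)=0$ forces $\mu(E)=0$, which is the asserted absolute continuity with respect to $\H^{d-1+\alpha}$. I do not expect a genuine obstacle here: all the analytic content is contained in Lemma \ref{key lemma}, and the only points requiring minor care are the bookkeeping between side lengths and diameters and the reduction from balls to axis-aligned cubes lying inside $[0,1]^{d}$.
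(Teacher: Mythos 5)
Your proposal is correct and follows exactly the route the paper intends: the paper derives this corollary ``immediately'' from the upper bound in Lemma \ref{key lemma} together with the mass distribution principle of \cite[Chapter 4]{f}, and your argument simply fills in the bookkeeping (cube $=I\times J$, diameters comparable to side length, reduction of balls to axis-aligned cubes) that the paper leaves implicit. The only detail worth flagging is that Lemma \ref{key lemma} is stated for $d\geq 2$, so the case $d=1$ needs the separate (and easier) appeal to Lemma \ref{lemma1}, just as in the paper's proof of Proposition \ref{thm1}.
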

\begin{proof}[Proof of Theorem \ref{thm1}.]
Let $\mu \in \ID([0,1]^d)$. If $d=1$, then $\mu$ is doubling measure on $[0,1]$ and $E$ has Hausdorff dimension zero.  By Lemma \ref{lemma1}, we know that any set with Hausdorff dimension zero is thin for doubling measures. Thus we arrive at the result for $d=1$.

For the case $d \geq 2$, applying the Corollary \ref{holder}, there is positive $\alpha$ such that $\mu$ is absolutely continuous to $\H^{d-1+\alpha}$. 
Since $\dim_H E \leq d-1$, so $\H^{d-1+\alpha}(E)=0$ and thus $\mu(E)=0$. We complete the proof by the arbitrary choice of $\mu\in \ID([0,1]^d)$.
\end{proof}

Since any $k$-rectifiable sets of $\R^d$ ( \cite[Chapter 15]{ma}) have Hausdorff dimension $k$, for $k<d$ they are thin for isotropic doubling measures on $\R^d$. Let $f : [0, 1] \rightarrow \R$ be a function. Recall that the graph of function  $f$ is $G(f):=\{(x,f(x)): x \in[0,1]\}$. Now we are going to apply Lemma \ref{key lemma} to prove that the graphs of continuous functions are thin for isotropic doubling measures.

\begin{pro}\label{graph}
Let $f: [0,1]^d \rightarrow [0,1]$ be a continuous function. Then $\mu(G(f))=0$ for all $\mu \in \ID([0,1]^{d+1})$.
\end{pro}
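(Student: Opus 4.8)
The plan is to cover the graph by thin boxes of the form $I\times J$ over a fine grid partition of the base cube $[0,1]^d$, and to estimate the measure of each box by the upper bound in Lemma \ref{key lemma}. The crucial feature of that bound is that the exponent on $|I|$ is exactly $d$, matching the dimension of the base; this is what will let the partition sum telescope down to a single small factor.

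Concretely, fix $\mu\in\ID([0,1]^{d+1})$ with isotropic doubling constant $A$. First I would invoke uniform continuity of $f$ on the compact set $[0,1]^d$: writing $\omega(h)=\sup\{|f(x)-f(y)|:x,y\in[0,1]^d,\,|x-y|\le h\}$ for the modulus of continuity, we have $\omega(h)\to 0$ as $h\to 0$. Partition $[0,1]^d$ into a grid of $\sim h^{-d}$ subcubes $I$ of side length $h$. Over each such cube the oscillation of $f$ is at most $\omega(h\sqrt d)$, so $G(f)\cap(I\times[0,1])$ is contained in a box $I\times J_I$, where $J_I\subset[0,1]$ is an interval with $|J_I|\le\omega(h\sqrt d)$. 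These boxes cover $G(f)$.

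Next I would apply Lemma \ref{key lemma} with ambient dimension $d+1$, so that the base cube $I$ lives in $[0,1]^d$ and the bound reads $\mu(I\times J_I)\le C|I|^{d}|J_I|^{\alpha}$, with $C$ and $\alpha$ depending only on $d$ and $A$. Summing over the $\sim h^{-d}$ cubes and using $|I|=h\sqrt d$ together with $|J_I|\le\omega(h\sqrt d)$, the factors $h^{d}$ and $h^{-d}$ cancel, leaving $\mu(G(f))\le C'\,\omega(h\sqrt d)^{\alpha}$ for a constant $C'$ depending only on $d$ and $A$. Letting $h\to 0$ and using $\omega(h\sqrt d)\to 0$ forces $\mu(G(f))=0$.

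Once Lemma \ref{key lemma} is in hand there is no serious obstacle; the only care needed is in bookkeeping the cancellation. The essential mechanism is that the $|J|^{\alpha}$ factor decays as the covering boxes become thin, while the $|I|^{d}$ factor is exactly the right power to be absorbed by the number of cubes in the grid. The sharpness of the exponent $d$ on $|I|$ — which ultimately comes from part (ii) of Lemma \ref{isotropic}, the comparison of the projected measure with Lebesgue measure on a face — is precisely what makes the graph thin, no matter how badly behaved the continuous function $f$ is.
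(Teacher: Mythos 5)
Your proof is correct and follows essentially the same route as the paper: cover $G(f)$ by thin boxes $I\times J_I$ over a fine grid of the base cube using uniform continuity, apply the upper bound of Lemma \ref{key lemma} in ambient dimension $d+1$, and observe that the $|I|^{d}$ factor exactly cancels the count of grid cubes, leaving only the small factor $\omega(h\sqrt d)^{\alpha}$. The paper phrases this with dyadic cubes of side $2^{-n}$ and an $\epsilon$--$\delta$ formulation of uniform continuity rather than a modulus of continuity, but the argument is the same.
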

\begin{proof}
Let $\mu \in \ID([0,1]^{d+1})$, then there is positive $C$ and $\alpha$ such that the estimate $\eqref{main use}$ holds. Since $f$ is continuous on $[0,1]^d$, it's well know that $f$ is uniformly continuous on $[0,1]^d$. Thus for any  $\epsilon >0$, there is $\delta$ such that 
$|f(x)-f(y)|\leq
\delta$ for all $x,y \in [0,1]^d$ with $|x-y|\leq \delta$.

Choose $n \in \N$, such that $2^{-n}\sqrt{d} \leq \delta$. Let $\D_{n}$ denote all the dyadic cubes of  $[0,1]^d$ with side-length $2^{-n}$. For each  cube $I$ of $\D_{n}$, there is an interval $I'\subset [0,1]$ with $|I'|\leq \epsilon$ such that 
$\{(x,f(x)): x \in I\} \subset I \times I'$. Whence
\[
G(f) \subset \cup_{I \in \D_{n}}I\times I'.  
\]
By applying Lemma \ref{key lemma}, we have
\begin{equation}
\mu(G(f))\leq \sum_{I \in \D_{n}} \mu(I\times I')\leq \sum_{I \in \D_{n}} C\epsilon^{\alpha}|I|^{d}\leq C\epsilon^{\alpha}(\sqrt{d})^{d}.
\end{equation}
Let $\epsilon\rightarrow 0$, then we have $\mu(G(f))=0$. We finish the proof by the arbitrary choice of $\mu$.   
\end{proof}

By using the same idea (applying Lusin theorem) as in \cite{tt}, the result of  Proposition \ref{graph} is also holds if we change continuous function to measurable function.
   
\begin{cor}
Let $f: [0,1]^d \rightarrow [0,1]$ be a measurable  function with respect to Lebesgue measure. Then $G(f)$ is thin for doubling measures on $[0,1]^{d+1}$.
\end{cor}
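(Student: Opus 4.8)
The plan is to reduce the measurable-function case to the continuous case of Proposition \ref{graph} by invoking Lusin's theorem, exactly as indicated by the authors' remark. The key observation is that a measurable function agrees with a continuous function on a set of nearly full Lebesgue measure, and since isotropic doubling measures are absolutely continuous with respect to $\mathcal{L}^{d}$ in the appropriate sense (via Corollary \ref{holder}, isotropic doubling measures are absolutely continuous to $\H^{d-1+\alpha}$, which controls lower-dimensional behaviour), the exceptional set where $f$ differs from a continuous approximant can be made negligible.

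More precisely, fix $\mu \in \ID([0,1]^{d+1})$. For each $k \in \N$, apply Lusin's theorem to obtain a closed set $K_k \subset [0,1]^d$ with $\mathcal{L}^d([0,1]^d \setminus K_k) < 1/k$ and a continuous function $g_k : [0,1]^d \to [0,1]$ such that $f = g_k$ on $K_k$. Then I would split the graph as
\[
G(f) \subset \{(x, f(x)) : x \in K_k\} \cup \{(x,f(x)) : x \in [0,1]^d \setminus K_k\}.
\]
The first piece is contained in $G(g_k)$ (since $f$ and $g_k$ agree on $K_k$), and by Proposition \ref{graph} we have $\mu(G(g_k)) = 0$, so the first piece contributes nothing. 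The heart of the argument is therefore to control the second piece, the graph over the small exceptional set $E_k := [0,1]^d \setminus K_k$.

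For the second piece, I would cover $E_k$ efficiently by dyadic cubes and lift this to a covering of the graph over $E_k$ by boxes of the form $I \times [0,1]$, then apply Lemma \ref{key lemma}. Since $\mathcal{L}^d(E_k)$ is small, one can choose a covering $\{I\}$ of $E_k$ by dyadic cubes with $\sum_I |I|^d$ comparable to $\mathcal{L}^d(E_k) < 1/k$ (up to dimensional constants). Using $\{(x,f(x)): x \in E_k\} \subset \bigcup_I (I \times [0,1])$ together with the upper bound $\mu(I \times [0,1]) \leq C|I|^{d-1}\cdot 1^\alpha = C|I|^{d-1}$ from Lemma \ref{key lemma}, one gets an estimate of the form
\[
\mu\bigl(\{(x,f(x)) : x \in E_k\}\bigr) \leq C \sum_I |I|^{d-1}.
\]
This is where the main obstacle lies: the exponent is $d-1$, not $d$, so a crude bound $\sum_I |I|^{d-1}$ need not be small merely because $\sum_I |I|^d$ is. I expect to resolve this by choosing all covering cubes at a common small dyadic scale $2^{-n}$, so that each $|I|^{d-1} = 2^{-n(d-1)}\cdot (\sqrt d)^{d-1} = |I|^d \cdot 2^{n}(\sqrt d)^{-1}$ and the covering uses roughly $2^{nd}\mathcal{L}^d(E_k)$ cubes; balancing the number of cubes against the scale shows that refining the Lusin set (letting $\mathcal{L}^d(E_k) \to 0$ faster than the scale factor grows) forces the exceptional contribution to zero. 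Concretely, one takes $k \to \infty$ while simultaneously letting the cube scale tend to $0$ at a compatible rate, so that $\mu(G(f)) = 0$ follows, and the arbitrary choice of $\mu$ completes the proof.
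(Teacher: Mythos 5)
Your first half — Lusin's theorem to produce a continuous $g_k$ agreeing with $f$ off a set of measure $<1/k$, then Proposition \ref{graph} to kill the graph over the good set — is exactly the paper's argument. The gap is in your treatment of the exceptional set $E_k=[0,1]^d\setminus K_k$, and it is twofold. First, the exponent you extract from Lemma \ref{key lemma} is wrong: the measure lives on $[0,1]^{d+1}$, so in that lemma the cube $I$ ranges over $[0,1]^{(d+1)-1}=[0,1]^{d}$ and the estimate reads $\mu(I\times[0,1])\leq C|I|^{d}$, not $C|I|^{d-1}$ (this exponent $d$ is precisely what makes the proof of Proposition \ref{graph} close). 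So the ``main obstacle'' you identify is an artifact of a dimension miscount. Second, the workaround you build for this nonexistent obstacle is itself false: the number of dyadic cubes of side $2^{-n}$ needed to cover $E_k$ is \emph{not} roughly $2^{nd}\mathcal{L}^d(E_k)$. The Lusin exceptional set is relatively open and can be dense in $[0,1]^d$ — indeed, if $f$ is the indicator of a set $B$ such that both $B$ and its complement meet every ball in positive measure, then \emph{every} continuous function agreeing with $f$ off a small set must differ from $f$ on a dense open set. For dense $E_k$, every one of the $2^{nd}$ dyadic cubes of side $2^{-n}$ meets $E_k$ at every scale $n$, so an equal-scale cover costs total volume close to $\mathcal{L}^d(\overline{E_k})$ (possibly $1$), not $\mathcal{L}^d(E_k)$, and no ordering or balancing of the limits $k\to\infty$ and $n\to\infty$ repairs this.

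The paper avoids covering altogether: by Lemma \ref{isotropic}(ii), the pushforward of $\mu$ under the orthogonal projection $\pi\colon[0,1]^{d+1}\to[0,1]^{d}\times\{0\}$ is comparable to $\mathcal{L}^{d}$, which yields
\begin{equation*}
\mu(D\times[0,1])\leq C\,\mathcal{L}^{d}(D)\quad\text{for \emph{every} } D\subset[0,1]^d,
\end{equation*}
and applying this to $D=\{x: f(x)\neq g(x)\}$ together with $G(f)\subset G(g)\cup(D\times[0,1])$ finishes the proof in one line. Alternatively, your covering strategy can be repaired: use the correct exponent $d$, and cover the relatively open set $E_k$ not at a common scale but by its maximal dyadic cubes (a Whitney-type decomposition, with varying scales), which have disjoint interiors and total volume at most $\mathcal{L}^d(E_k)<1/k$; then $\sum_I\mu(I\times[0,1])\leq C\sum_I|I|^{d}\leq C(\sqrt d)^{d}/k$, and the leftover points on dyadic hyperplanes contribute nothing since such hyperplanes have Hausdorff dimension $d$ and are $\mu$-null by Proposition \ref{thm1}. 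Either repair works; as written, your argument does not.
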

\begin{proof}
 Applying Lusin theorem (and it's normal corollary), for any $\epsilon >0$, there is a continuous function $g: [0,1]^d \rightarrow [0,1]$ such that 
\begin{equation}\label{near1}
\L^d(\{x: f(x) \neq g(x)\})< \epsilon.
\end{equation}
Let $\mu \in \ID([0,1]^d)$. Lemma \ref{isotropic}(ii) says that there is a constant $C$ which depends on $d$ and $\mu$ only, such that
\begin{equation}\label{near2}
\mu(A\times [0,1]) \leq C \L^d(A) \text{ for any } A \subset [0,1]^d.
\end{equation}
Let $D= \{x: f(x) \neq g(x)\}$. Since 
\[
G(f)= G(g) \cup (G(f) \backslash G(g)) \subset G(g) \cup (D\times [0,1]),
\]
the estimate \eqref{near2} and Proposition \ref{graph}, we have 
$\mu(G(f)) \leq C\epsilon.$ By the arbitrary choice of $\epsilon$, we have $\mu(G(f))=0$. Thus $G(f)$ is thin for isotropic doubling measures on $[0,1]^{d+1}$. 
\end{proof}

\section{Doubling measures and self-affine sets}

Let $\Lambda$ be the attractor
of the self-affine IFS
\begin{equation}
\F := \{f_i(x)=T_i x +t_i\}^m_{i=1}, x\in \R^d.
\end{equation}
We always assume that the maps $f_i$ are contractive and $T_i$ are non-singular linear maps for each $1\leq i \leq m$. For more details on self-affine sets, see \cite[Chapter 9]{f}. The following condition is often used to avoid overlap of IFS. Recall that the IFS $\F$ is said to satisfy the open set condition ($OSC$) if  
there exists a
non-empty open set $V \subset \R^d$ such that
\begin{itemize}
\item$f_i(V ) \subset V$ holds for all $1\leq i \leq m$;

\item $f_i(V ) \cap f_j(V ) = \emptyset$ for all $i \neq j$.
\end{itemize}

We recall some standard notation for  IFS. Let $S=\{1,\cdots, m\}$. Denote  $S^\ast:= \cup^{\infty}_{k=1} S^k$ all the finite words and $S^\N$ all the infinite words. Let $\sigma =(i_1, \cdots, i_k) \in S^k$. Define $f_\sigma:= f_{i_1} \circ \cdots \circ f_{i_k}$.

\begin{defi}\label{OSCH}
We say that the IFS $\F$ satisfies the $OSCH$ (open set condition with hole) if $\F$ satisfies the $OSC$ and for the open set $V$,  $V \backslash \bigcup^m_{i=1} f_i(V)$ has non empty interior.  
\end{defi}

\begin{defi}
A map $f$  is called non-singular affine map on $\R^d$, if there is a non-singular linear map $T$ and a vector $t \in \R^d$ such that $f(x)=T(x)+t, x \in  \R^d$.  A map $f$  is called diagonal affine map on $\R^d$, if there is a diagonal matrix $D=D(\lambda_1,\cdots, \lambda_d)$ and a vector $t \in \R^d$ such that $f(x)=D(x)+t, x \in  \R^d$. Note that the diagonal maps and non-singular linear  maps coincide when $d=1$.  We call  a cube $Q$ stable if  $Q \subset [0,1]^d$ and there is $x \in \R^d$ and $\rho >0$ such that $Q = x+\rho [0,1]^d$. 
\end{defi}

\begin{lem}\label{dia}
Let $Q \subset [0,1]^d$ be stable cube and $\mu \in \ID(\R^d)$.  Then there exist a positive constant $C$ (depending on $\mu$ and side-length of $Q$) such that for any diagonal affine map $f$, we have 
\begin{equation}\label{good}
\mu(f(Q)) \geq C\mu(f([0,1]^d)).
\end{equation}
\end{lem}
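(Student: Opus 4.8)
The plan is to exploit the fact that, because $f$ is diagonal, the image box $R:=f([0,1]^d)$ and the image $f(Q)$ are aligned, rescaled copies of one another, and then to cover $R$ by a bounded number of congruent translates of $f(Q)$ whose $\mu$-measures are all comparable to $\mu(f(Q))$ via the isotropic doubling estimate of Lemma \ref{isotropic}(i).

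First I would record the geometry. Write $D=D(\lambda_1,\dots,\lambda_d)$ and $Q=x+\rho[0,1]^d$. We may assume $f$ is non-singular, i.e. each $\lambda_i\neq0$; this is the only case needed in what follows, and it is the substantive one. Then $R=f([0,1]^d)$ is an axis-parallel rectangular box with side lengths $|\lambda_i|$, while
\[
f(Q)=Dx+t+\rho\,D([0,1]^d)
\]
is an axis-parallel box with side lengths $\rho|\lambda_i|$ contained in $R$. In particular $f(Q)$ is a $\rho$-scaled copy of $R$ sitting inside $R$, and $diam\, f(Q)=\rho\, diam\, R$. This scaling identity is the crux of the argument: both distances and diameters transform by the same factor under $f$, so the relevant ratios are governed only by the position of $Q$ inside $[0,1]^d$, independently of $f$ and of how extreme the $\lambda_i$ are.

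Next I would set up the covering. Tile $\R^d$ by the lattice of translates of $f(Q)$ generated by its side lengths $\rho|\lambda_i|$, and let $R_1,\dots,R_N$ be those tiles that meet $R$. Each $R_j$ is a translate of $f(Q)$, hence congruent to it, and $R\subset\bigcup_{j}R_j$; counting tiles axis by axis gives $N\le(\lceil1/\rho\rceil+1)^d$, a bound depending only on $\rho$. Hence $\mu(R)\le\sum_{j=1}^{N}\mu(R_j)$. For each $j$, since $R_j\cap R\neq\emptyset$ and $f(Q)\subset R$, picking $p\in R_j\cap R$ and any $q\in f(Q)$ gives $dist(R_j,f(Q))\le|p-q|\le diam\, R=\rho^{-1}\,diam\, f(Q)$. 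Applying Lemma \ref{isotropic}(i) to the congruent boxes $R_j$ and $f(Q)$, with $m_j=[dist(R_j,f(Q))/diam\, f(Q)]+1\le[1/\rho]+1$, yields $\mu(R_j)\le A^{[1/\rho]+1}\mu(f(Q))$.

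Combining these gives $\mu(R)\le N\,A^{[1/\rho]+1}\mu(f(Q))$, so the lemma holds with $C=(N\,A^{[1/\rho]+1})^{-1}$, which depends only on $\rho$ (the side length of $Q$) and on $A$ (hence on $\mu$), exactly as claimed. The only real obstacle is keeping the exponent $m_j$ in Lemma \ref{isotropic}(i) uniformly bounded: a tile $R_j$ that is far from $f(Q)$ measured in units of $diam\, f(Q)$ would blow up the factor $A^{m_j}$. This is precisely what the scaling identity $diam\, f(Q)=\rho\, diam\, R$ resolves, since together with $f(Q)\subset R$ and $R_j\cap R\neq\emptyset$ it forces $m_j\le[1/\rho]+1$ no matter how anisotropic $f$ is. Two minor points remain to be verified in the full write-up: that the covering tiles really are translates of $f(Q)$ (so that the congruence hypothesis of Lemma \ref{isotropic}(i) is met), and the book-keeping of the tile count $N$.
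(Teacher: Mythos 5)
Your proof is correct, but it takes a genuinely different route from the paper's. The paper works in the \emph{domain} and in two stages: writing $Q=I_1\times\cdots\times I_d$ with side $a$ and $V_Q=I_1\times[0,1]^{d-1}$, it covers the slab $V_Q$ by $N\le(4/a)^d$ congruent cubes of side $a$ arranged so that any two are joined by a chain of intersecting cubes, uses the isotropic doubling condition link by link to get $\mu(Q)\ge A^{-N}\mu(Q_i)$, and sums to get $\mu(Q)\ge \frac{1}{NA^N}\mu(V_Q)$; the role of diagonality is that $f$ sends these congruent chained cubes to congruent chained rectangular boxes, so the identical inequalities hold for the images, and a second pass (from $V_Q$ up to $[0,1]^d$) finishes with $C=C_1C_2$. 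You instead work directly in the \emph{image}: tile $\R^d$ by translates of $f(Q)$, cover $R=f([0,1]^d)$ by at most $N\le(\lceil 1/\rho\rceil+1)^d$ tiles, and control each tile by Lemma \ref{isotropic}(i), the key observation being the scaling identity $\mathrm{diam}\, f(Q)=\rho\,\mathrm{diam}\, R$, which caps the exponent at $m_j\le[1/\rho]+1$ uniformly in $f$. Both arguments rest on the same mechanism (diagonal maps preserve congruence and relative distances of axis-parallel boxes, so isotropic doubling comparisons survive the mapping), but your one-step lattice covering with the quantitative form of Lemma \ref{isotropic}(i) is more direct than the paper's two-stage slab-and-chain construction, and it produces an explicit constant $C=(N A^{[1/\rho]+1})^{-1}$.

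Two small points to settle in a full write-up. First, the paper states Lemma \ref{isotropic} only for $d\ge 2$; for $d=1$ either note that part (i) holds by the same elementary chaining of congruent intervals, or dispose of $d=1$ via Lemma \ref{lemma1} as the paper does. Second, your reduction to non-singular $f$ should be justified: if some $\lambda_i=0$ then $f([0,1]^d)$ lies in a hyperplane, hence $\mu(f([0,1]^d))=0$ by Corollary \ref{holder}, and the inequality is trivial. Neither point affects the substance.
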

\begin{proof}
If $d=1$, then we arrive at the estimate by  applying the Lemma \ref{lemma1}. Now we consider the case  $d\geq 2.$ We assume that $Q=I_1 \times \cdots \times I_d$ where $I_i \subset [0,1]$. Denote by $a$ the side-length of $Q$ and $V_Q = I_1 \times [0, 1]^{d-1}$. Let $\{Q_i\}_{i=1}^{N}$ be a sequence of closed cubes with the same edge length $a$ and  disjoint interior. Furthermore we ask that $\{Q_i\}_{i=1}^{N}$ satisfies $V_Q \subset \bigcup_{i=1}^N Q_i \subset [-2,2]^d$ and for any $Q_i$ and $Q_j$, $i\neq j$, there exist $i_1, i_2,\cdots,i_n$ such that $Q_i = Q_{i_1}, Q_j=Q_{i_n}$ and $Q_{i_k} \cap Q_{i_{k+1}}\neq \emptyset$ for $1\leq k \leq n-1$. By a simple volume argument,  $N \leq (\frac{4}{a})^d$. Thus 
\begin{equation}
\mu(Q)\geq A^{-N}\mu(Q_i) ~~~\text{for all} ~~~1\leq i \leq N.
\end{equation}
Summing both sides over index $i$, we have  
\begin{equation}
\mu(Q) \geq \frac{1}{N A^N} \mu(V_Q).
\end{equation}
Let $C_1 = (\frac{a}{4})^d A^{-(\frac{4}{a})^d}$, then $\mu(Q)\geq C_1 \mu(V_Q)$.

Since $f$ is a diagonal map,  we have that $f(Q_i)$  is a rectangle for  $1\leq i\leq N$. Applying the same argument as above, we have   
\begin{equation}
\mu(f(Q))\geq A^{-N} \mu(f(Q_i))~~~ \text{for all}~~~ 1\leq i \leq N,
\end{equation}
and
$\mu(f(Q))\geq C_1 \mu(f(V_Q))$.

Applying the same argument to $V_Q$ and $[0,1]^d$ (in place of $Q, V_Q$), we have $\mu(f(V_Q)) \geq C_2 \mu(f([0,1]^d))$ where $C_2$ is a positive constant that depends on $Q$ and the doubling constant $A$ only.
Letting $C= C_1C_2$ we complete the proof.
\end{proof}

Now we are going to show that the above result also holds for any non-singular linear map. We will use the polar decomposition of a matrix. The polar decomposition says that for any matrix $T$, there exists  a symmetric matrix $S$ and orthogonal matrix $O$ such that $T=OS$. 
Furthermore if $T$ is non-singular, then $S$ is positive definite. For more details see \cite[Chapter 3]{eg}.

\begin{figure}
\centering 
\includegraphics[width=0.5\textwidth]{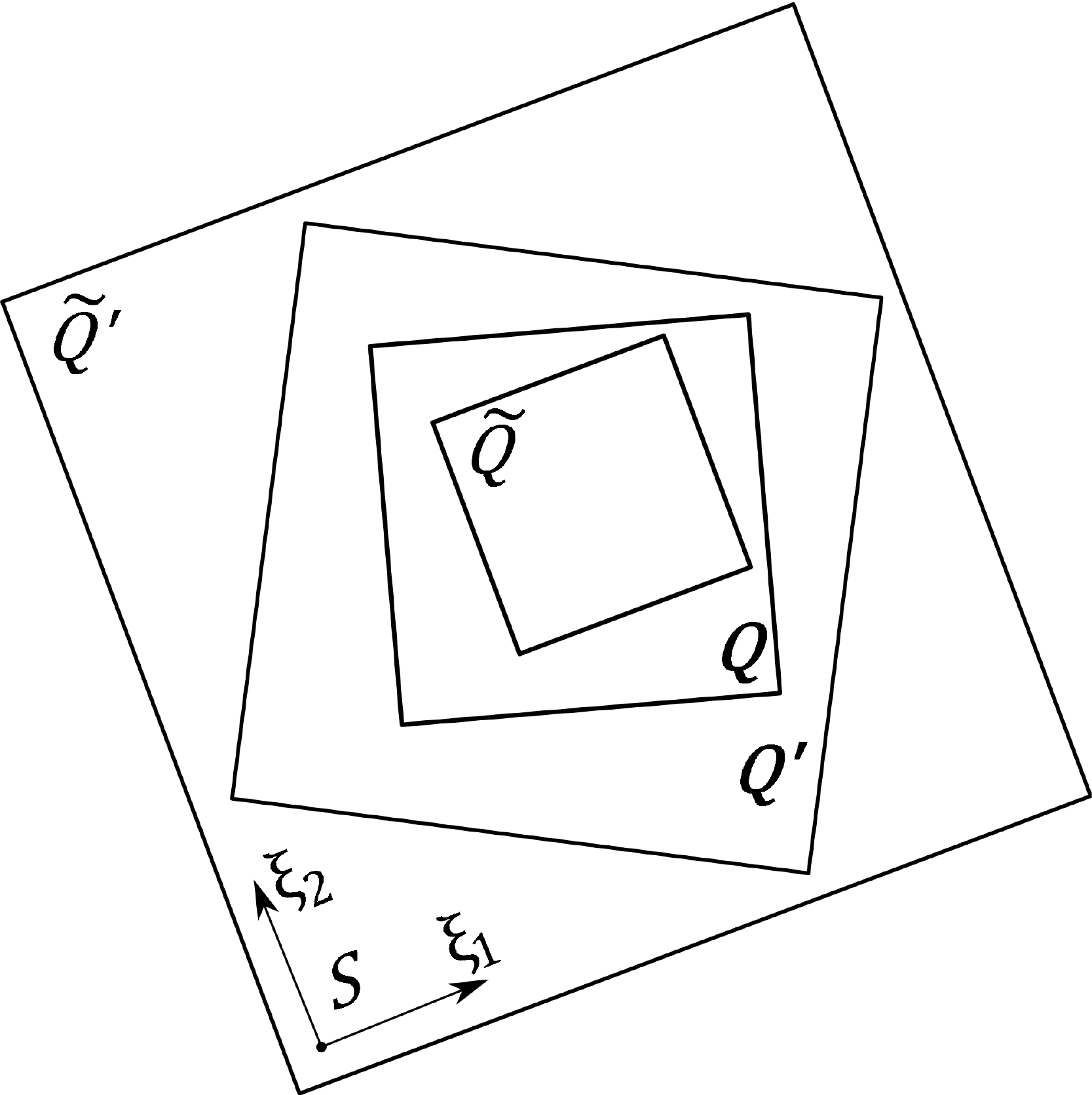}
\caption{The cubes of Proposition \ref{fu} for $d=2$.}
\label{figure1}
\end{figure}

\begin{pro}\label{fu}
Let $Q \subset Q'$ and $\mu \in \ID(\R^d)$. Then for any non-singular affine map $f$, we have 
\begin{equation}\label{need}
\mu(f(Q)) \geq C\mu(f(Q')),
\end{equation}
where $C$ is a positive constant doesn't depends on $f$. 
\end{pro}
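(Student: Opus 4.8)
The plan is to reduce the arbitrary non-singular map to a diagonal one by stripping off its orthogonal factors, and then to trap the resulting (skew) parallelepiped between the images of two axis-aligned cubes, to which Lemma \ref{dia} applies.

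First I would use the decompositions recalled above. Writing $f(x)=Tx+t$ and combining the polar decomposition $T=OS$ with the spectral theorem $S=P\Delta P^{T}$ (here $O,P$ are orthogonal and $\Delta$ is diagonal and positive), one obtains a singular value decomposition $T=U\Delta V$ with $U=OP$ and $V=P^{T}$ orthogonal. The key elementary observation is that $\ID(\R^{d})$ is invariant under rigid motions: if $g(x)=Ux+t$ and we set $\nu(E):=\mu(g(E))$, then $g$ carries congruent rectangular boxes with nonempty intersection to congruent rectangular boxes with nonempty intersection, so $\nu\in\ID(\R^{d})$ with the same constant $A$. Since $\mu(f(Q))=\mu(g(\Delta VQ))=\nu(\Delta VQ)$ and likewise for $Q'$, it suffices to prove $\nu(\Delta R)\ge C\,\nu(\Delta R')$, where $R:=VQ\subset R':=VQ'$ are the rotated cubes obtained from $Q\subset Q'$; note that $R,R'$ have the same side-lengths as $Q,Q'$.

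Next I would remove the rotation on the cubes by a sandwiching argument. A rotated cube is not axis-aligned, but rotating a cube distorts its inscribed and circumscribed axis-aligned cubes only by a dimensional factor: there are axis-aligned cubes $c\subset R$ and $C\supset R'$ with $c\subset R\subset R'\subset C$ and $|c|/|C|\ge |Q|/(d|Q'|)$. Applying the inclusion-preserving linear bijection $\Delta$ and using monotonicity of $\nu$ gives
\[
\nu(\Delta R)\ge \nu(\Delta c),\qquad \nu(\Delta R')\le \nu(\Delta C).
\]
Finally I would compare the two axis-aligned boxes $\Delta c$ and $\Delta C$. Letting $h$ be the diagonal affine map sending $C$ onto $[0,1]^{d}$, the composition $g_{0}:=\Delta\circ h^{-1}$ is again diagonal, $h(c)$ is a stable cube of side-length at least $|Q|/(d|Q'|)>0$, and Lemma \ref{dia} applied to $\nu$, the stable cube $h(c)$ and the diagonal map $g_{0}$ yields
\[
\nu(\Delta c)=\nu(g_{0}(h(c)))\ge C\,\nu(g_{0}([0,1]^{d}))=C\,\nu(\Delta C).
\]
Chaining the three inequalities produces $\nu(\Delta R)\ge C\,\nu(\Delta R')$, and since the constant from Lemma \ref{dia} depends only on the doubling constant and the side-length of the stable cube, here $C$ depends only on $A$, $d$ and the ratio $|Q|/|Q'|$, hence not on $f$.

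The main obstacle is precisely that $f(Q)$ is a genuinely skew parallelepiped, so neither the defining inequality for isotropic doubling (which concerns rectangular boxes) nor Lemma \ref{dia} (which concerns diagonal images of axis-aligned cubes) applies to it directly, and a general parallelepiped cannot be trapped between two boxes of comparable size. The two facts that resolve this are the isometry-invariance of $\ID(\R^{d})$, which discards the outer orthogonal factor and the translation for free and leaves a diagonal map acting on a merely rotated cube, and the observation that a rotation changes the inscribed and circumscribed axis-aligned cubes of a cube by only a dimension-dependent factor, which is what makes the sandwiching uniform in $f$.
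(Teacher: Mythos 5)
Your proposal is correct and follows essentially the same route as the paper: extract the singular values of $T$ (the paper's polar decomposition $T=OS$ followed by diagonalizing $S$ is exactly your SVD $T=U\Delta V$), use invariance of isotropic doubling under rigid motions to discard the orthogonal factors and the translation, sandwich between inscribed and circumscribed cubes at the cost of a dimension-dependent factor, and reduce to Lemma \ref{dia}. The only cosmetic difference is that you conjugate the measure by the rotation so that Lemma \ref{dia} applies verbatim to axis-aligned cubes, whereas the paper reruns the lemma's argument on cubes tilted to the eigenframe of $S$; the mathematical content is identical.
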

\begin{proof}
If $d=1$, then the non-singular map $f$ is the same as diagonal map. Thus we arrive at the estimate by  applying Lemma \ref{lemma1} again.  Now we  consider $d\geq 2.$ For a non-singular map $f$, there is  a non-singular matrix $T$ and a vector $t \in \R^d,$ such that $f(x)=Tx+t$ for $x \in  \R^d$. For the convenience we use the same notations  as above writing $T=OS$. 

For positive definite matrix $S$,  it's well known that there exist a standard orthogonal basis $\{\xi_1, \cdots, \xi_d\}$ such that $S\xi_i=\lambda_i \xi_i,$ and $\lambda_i >0$ for all $1\leq i \leq d$. Let $I(\xi_i):=\{t\xi_i : t \in [-1/2,1/2]\}, 1\leq i \leq d$ and $Q_S= I(\xi_1)\times\cdots \times I(\xi_d)$ be the unite cube.
Let  $\widetilde{Q} (x,\rho):=x+\rho Q_S.$ Denote by $a$ the side-length of $Q$ and $x_0$ the center of $Q$.  By a simple geometric argument, we have 
\[
\widetilde{Q} (x_0, \frac{a}{\sqrt{d}}) \subset B(x_0, \frac{a}{2}) \subset Q.
\]
Denote by $a'$ the side-length of $Q'$ and $x_0'$ the center of $Q'$. Again by a simple geometric argument we have 
\[
Q' \subset B(x_0', \frac{a'\sqrt{d}}{2}) \subset \widetilde{Q'} (x_0', a'\sqrt{d}).
\]

Applying the same argument as in Lemma \ref{dia} to $\widetilde{Q} \subset \widetilde{Q'}$, there is a positive constant $C$ such that
\begin{equation}\label{55}
\mu(S\widetilde{Q})\geq C \mu(S\widetilde{Q'}). 
\end{equation}
Note that the  estimate \eqref{55} still holds with the same constant $C$ after rotations and translations. Thus
\begin{equation}\label{1}
\mu(f(\widetilde{Q}))\geq C\mu( f(\widetilde{Q'})).
\end{equation}
This completes the proof since $f(\widetilde{Q})\subset f(Q)$ and 
$f(Q')\subset f(\widetilde{Q'})$. 
\end{proof}

\begin{proof}[Proof of Theorem \ref{self-affine}]
Let $\Lambda$ be the attractor
of the self-affine $IFS$
\[
\{f_i(x)=T_i x +t_i\}^m_{i=1}, x\in \R^d
\]
which satisfies the $OSCH$. Since the IFS satisfies $OSCH$, there is an open set $V$ and a cube $Q$ with non empty interior such that $Q \subset V \backslash \bigcup^m_{i=1} f_i(V)$. It's well known that (see \cite[Chapter 9]{f}) there is compact cube $Q'$ such that  
$f_i(Q') \subset Q', 1\leq i \leq m, V\subset Q'$ and 
\[
\Lambda = \bigcap^\infty_{k=1}\bigcup^{\infty}_{\sigma\in S^k}f_\sigma(Q').
\]

Since our $IFS$ satisfies $OSCH$ and by the position of $Q$, we have $f_\sigma(Q) \cap f_\tau (Q)=\emptyset$ for any $\sigma\neq \tau$ where $\sigma, \tau \in S^\ast$. For each $k \in \N$, denote $G_k = \bigcup_{\sigma \in S^k} f_\sigma(Q)$. Let $\mu \in \ID(\R^d)$. Note that $Q \subset Q'$ and $f_\sigma$ is non-singular affine map for 
any $\sigma  \in S^\ast$. Thus by Proposition \ref{fu}, there exists a positive constant $C$ such that 
\begin{equation}\label{use}
\mu(f_\sigma(Q)) \geq C\mu(f_\sigma(Q')), \text{for any} ~~\sigma \in S^\ast.
\end{equation}

Since $f_{\sigma}(Q)$ are pair disjoint for $\sigma \in S^\ast$, we have 
\[
\mu(G_k)= \sum_{\sigma \in S^k} \mu(f_{\sigma}(Q)).
\]
Summing  two sides of equation \eqref{use} over $\sigma \in S^k$, we get
\begin{equation}\label{final}
\mu(G_k)\geq C \sum_{\sigma \in S^k} \mu(f_\sigma(Q'))\geq C \mu(\Lambda),
\end{equation}
where the last inequality holds since $\Lambda \subset \bigcup_{\sigma \in S^k} f_\sigma(Q')$. 

Since $\bigcup^\infty_{k=1} G_k \subset Q'$, and $G_i \cap G_j = \emptyset$ for $i\neq j$, together with inequality \eqref{final}, we have 
\begin{equation}
\infty> \sum^{\infty}_{k=1}\mu(G_k) \geq C \sum^{\infty}_{k=1}\mu(\Lambda).
\end{equation}
Thus we have $\mu(\Lambda)=0$.
\end{proof}

We don't know whether  Theorem \ref{self-affine} is also holds for doubling measures. 
\begin{que}
Is the attractor of $IFS$ satisfies $OSCH$ thin for doubling measures? 
\end{que}

Now we are going to prove Theorem \ref{carpets}. We first recall the construction of Bara\'nski carpets, see \cite{bar}.

\begin{figure}
\centering 
\includegraphics[width=0.9\textwidth]{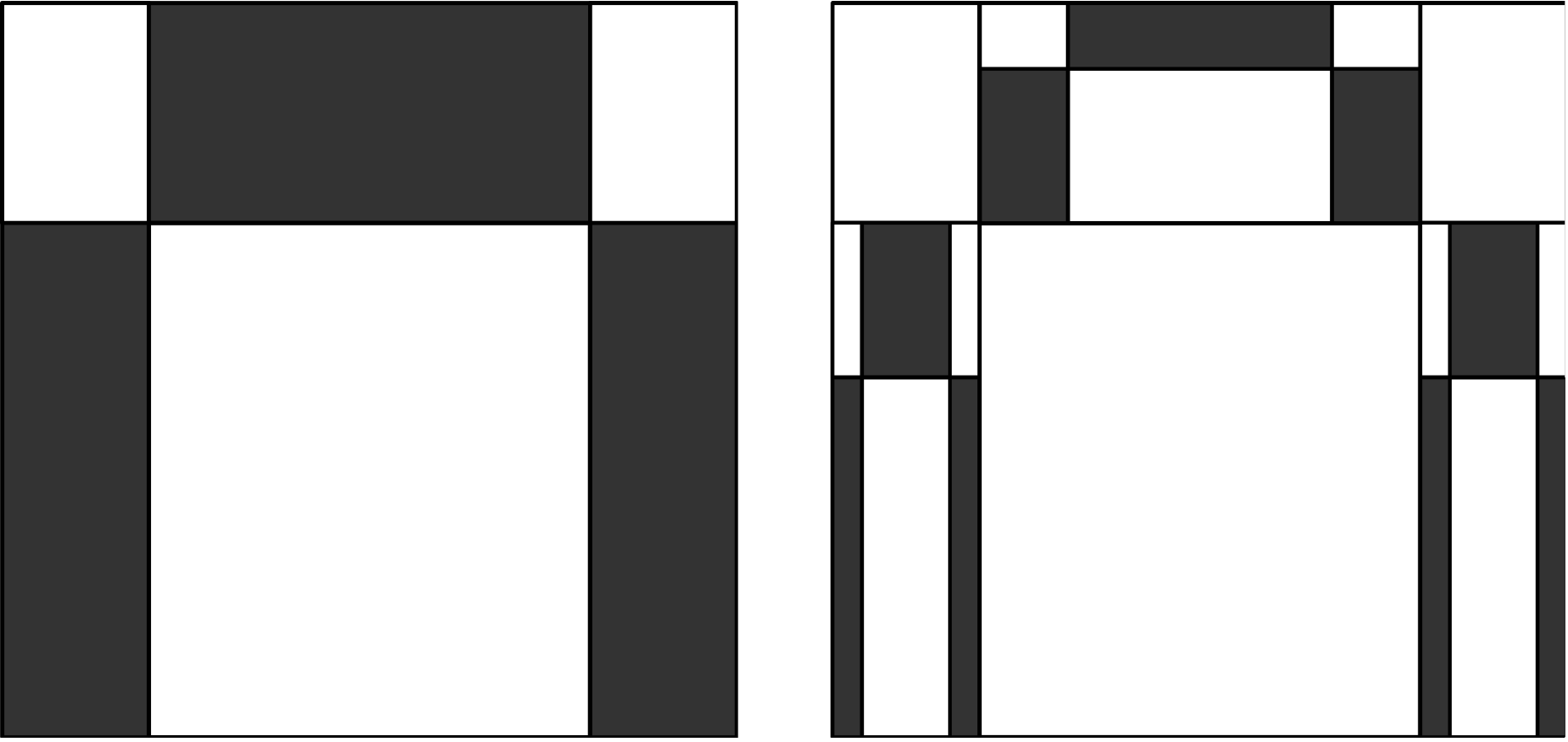}
\caption{The first two steps in the construction of $E$.}
\label{figure2}
\end{figure}

\begin{defi}\label{bb}
Let $\{a_i\}^p_{i=1}, \{b_j\}^q_{j=1}$ be two sequences of positive numbers such that $\sum^p_{i=1} a_i =1$ and $\sum^q_{j=1} b_j =1$ where $p,q \in \N$ and $p, q \geq 2$. We  have a partition of the unit square by $q$ horizontal lines and $p$ vertical lines. We exclude a sub-collection of these rectangles to form $E_{1}$ (We assume that at least  one rectangle was excluded  to avoid the trivial case). Iterate this construction for each rectangle of $E_1$ as above, in other words we  replace each rectangle of $E_1$ by an affine copy of $E_{1}$. In the end we have a limit set $E$. For an example see Figure \ref{figure2}. Recall that the limit set $E$ is called $BM$ (Bedford-McMullen) carpet if $a_i= 1/p$ for all $1\leq i\leq p$ and $b_j=1/q$ for all  $1\leq j\leq q$, see \cite[chapter 9]{f}. 
\end{defi}

Recall that a subset $E \subset \R^d$ is called porous if there exists $\alpha \in (0,1)$, such that for any ball $B(x, r)$, there is a ball $B(y, \alpha r) \subset B(x,r)$ satisfies $B(y,\alpha r)\cap E = \emptyset$. The concept of porosity is closely related to
the Assouad dimension. The connection is the following: A subset $E$ of $\R^d$ is
porous if and only if $\dim_A E < d$. For  more details, we refer to \cite[Theorem 5.2]{luu}. It's well know that if a set $E \subset \R^d$ is porous, then $E$ is thin for doubling measures on $\R^d$ (by applying the density argument for doubling measure on the porosity set, the same as \cite[p.40]{s}). Since the Assouad dimension
of Bara\'nski carpets can be less than or equal to $2$, see \cite{Fraser, Mackay}, thus we can't obtain Theorem \ref{carpets} by apply the above mentioned  result: if a set $E \subset \R^2$ has Assouad dimension less than $2$,  then $E$ is porous, and so $E$  is thin for doubling measures.

\begin{figure}
\centering 
\includegraphics[width=0.6\textwidth]{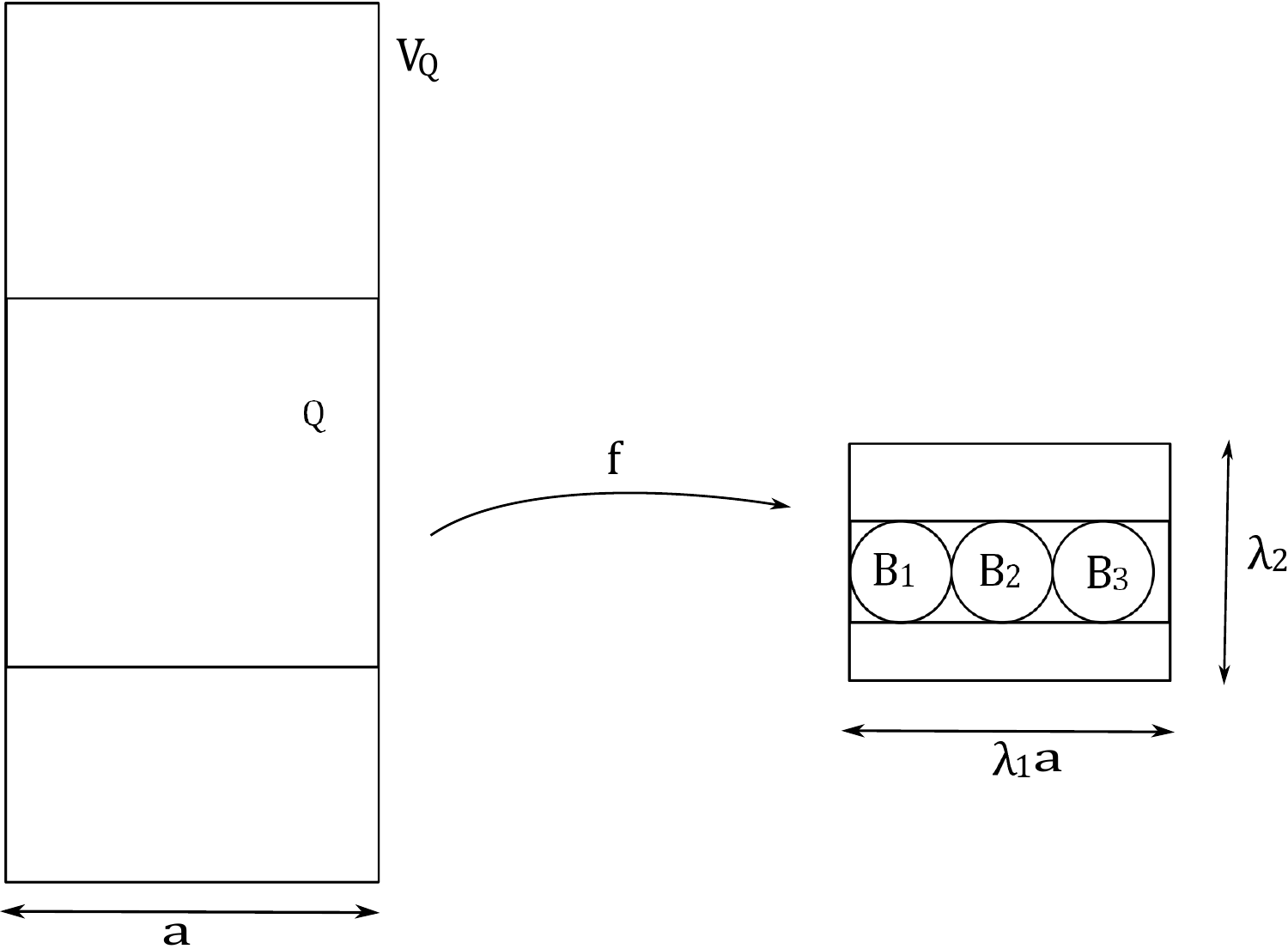}
\caption{}
\label{figure3}
\end{figure}
\begin{lem}\label{dd}
Let $Q \subset [0,1]^2$ be a cube with edge length $a$ and $\mu \in \D([0,1]^2)$. Denoted by $V_Q$ the smallest vertical strip of $[0,1]^2$ which contains $Q$.  Then there is a positive constant $C$ depending on $\mu$ and $a$ only, such that for any diagonal affine map $f(x):=D(\lambda_1, \lambda_2)(x)+t$ with $\lambda_1 \geq \lambda_2>0$, we have 
\begin{equation}\label{dgood}
\mu(f(Q)) \geq C\mu(f(V_Q)).
\end{equation}
\end{lem}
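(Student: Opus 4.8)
The plan is to reduce \eqref{dgood} to a statement about a bounded number of vertically stacked congruent rectangles, and then to compare consecutive rectangles using the doubling property applied to \emph{squares} rather than to the (possibly very eccentric) rectangles themselves.

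First I would record the geometry. Writing $Q=I_1\times I_2$ with $|I_1|=|I_2|=a$ and $f(x)=(\lambda_1 x_1+t_1,\lambda_2 x_2+t_2)$, the image $f(Q)$ is a rectangle of width $\lambda_1 a$ and height $\lambda_2 a$, while $f(V_Q)$ is a rectangle of the same width $\lambda_1 a$ but height $\lambda_2$. Thus $f(Q)$ is a horizontal sub-rectangle of $f(V_Q)$ occupying a vertical fraction $a$ of it, and --- crucially, because $\lambda_1\ge\lambda_2$ --- the rectangle $f(Q)$ is \emph{wide}, i.e. its width is at least its height. The difficulty, and the reason Lemma \ref{dia} cannot simply be copied, is that a plain doubling measure gives no control on the ratio $\mu(R')/\mu(R)$ for arbitrary congruent adjacent boxes $R,R'$; the isotropic hypothesis was exactly what supplied this in Lemma \ref{dia}.

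The heart of the matter is the following claim: if $R$ and $R'$ are congruent rectangles of dimensions $w\times h$ with $w\ge h$, stacked one directly above the other (sharing a horizontal edge), then $\mu(R')\le C_0\,\mu(R)$ with $C_0$ depending only on the doubling constant and \emph{not} on the eccentricity $w/h$. To prove it I would tile each of $R,R'$ by $M=\lceil w/h\rceil$ closed squares of side $h$, the $k$-th square $T'_k$ of $R'$ sitting directly above the $k$-th square $T_k$ of $R$. Each $T_k$ contains a ball of radius $h/2$ and $T'_k$ is contained in the concentric ball of radius $\tfrac{\sqrt{10}}{2}h$, so a bounded number of applications of the doubling inequality gives $\mu(T'_k)\le C_0\mu(T_k)$ with $C_0$ depending only on the doubling constant. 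Since the squares $T'_k$ cover $R'$ while the squares $T_k$ tile the single row $R$ with overlap multiplicity at most $2$, summing over $k$ yields $\mu(R')\le\sum_k\mu(T'_k)\le C_0\sum_k\mu(T_k)\le 2C_0\,\mu(R)$. This step is exactly where the hypothesis $w\ge h$ is used --- it is what lets me tile by genuine squares (comparable to balls) and thereby trade the uncontrolled rectangle ratio for a controlled ball ratio --- and I expect it to be the main obstacle, precisely because the constant must be kept independent of the aspect ratio $w/h$, which can be enormous.

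Finally I would stack. Cover the vertical extent of $f(V_Q)$ by $N$ consecutive rectangles $R_1,\dots,R_N$ congruent to $f(Q)$ (each of width $\lambda_1 a$ and height $\lambda_2 a$), arranged so that one of them is $f(Q)$ itself; since $f(V_Q)$ has height $\lambda_2$ and each row has height $\lambda_2 a$, the number $N\le 2\lceil 1/a\rceil+1$ depends only on $a$. Applying the claim along the stack gives $\mu(R_j)\le C_0^{\,N}\mu(f(Q))$ for every $j$, and since the rows cover $f(V_Q)$ we obtain
\[
\mu(f(V_Q))\le\sum_{j=1}^{N}\mu(R_j)\le N\,C_0^{\,N}\,\mu(f(Q)),
\]
which is \eqref{dgood} with $C=(N\,C_0^{\,N})^{-1}$, a constant depending only on $a$ and the doubling constant of $\mu$. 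The only remaining points are routine: that $f(V_Q)\subset[0,1]^2$ in the intended application (so that $\mu$ and all the auxiliary balls are defined on the sets involved), and the elementary multiplicity bookkeeping for the covering squares.
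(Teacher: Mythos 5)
Your proposal is correct, but it takes a genuinely different route from the paper's proof, so let me compare. The paper argues in one shot: inside the rectangle $f(Q)$ (of width $a\lambda_1$ and height $a\lambda_2$) it places $\lfloor\lambda_1/\lambda_2\rfloor$ pairwise disjoint balls $B_i$ of diameter $a\lambda_2$ in a single horizontal row --- this is exactly where $\lambda_1\geq\lambda_2$ enters, the same way it enters your argument --- and then observes that the dilated balls $\tfrac{2}{a}B_i$, which have radius $\lambda_2$, cover all of $f(V_Q)$. Iterated doubling (about $\log_2(2/a)$ steps) gives $\mu(\tfrac{2}{a}B_i)\leq A\,\mu(B_i)$ with $A$ depending on $\mu$ and $a$ only, and disjointness of the $B_i$ inside $f(Q)$ yields $\mu(f(V_Q))\leq\sum_i\mu(\tfrac{2}{a}B_i)\leq A\sum_i\mu(B_i)\leq A\,\mu(f(Q))$. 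You instead isolate a reusable key claim --- congruent $w\times h$ rectangles with $w\geq h$ sharing a horizontal edge have comparable measure, with constant independent of the eccentricity $w/h$ --- proved by tiling each row with side-$h$ squares and comparing concentric balls, and you then iterate this claim along a stack of $N\asymp 1/a$ rows. Both proofs rest on the same mechanism (wideness lets one work with balls or squares at the scale of the height of $f(Q)$), but the decompositions differ: the paper trades the whole vertical extent for a single dilation by the fixed factor $2/a$, giving a constant polynomial in $a$, whereas your row-by-row iteration gives a constant exponentially small in $1/a$, roughly $\bigl(N(2C_0)^N\bigr)^{-1}$ --- note your $C_0^N$ should be $(2C_0)^N$ because of the multiplicity-two overlap, and the extreme rectangles of your stack must be slid inward to stay in $[0,1]^2$, making one pair of rows overlap rather than abut; your square-and-ball comparison handles a vertical translate by at most $h$ verbatim, so this is harmless. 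Neither quantitative loss matters for the lemma, and your modular claim has the side benefit that it is essentially the statement the paper re-uses informally for the Bedford--McMullen sponges in Section 3.1.
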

\begin{proof}
Since $f$ is diagonal map, $f(Q)$ is a rectangle with sides $a\lambda_1$ and $a\lambda_2$. We are going to place a sequence of closed balls with the same diameter $a\lambda_2$ inside the rectangle $f(Q)$. We put the first ball $B_1$ at the left part of $f(Q)$ and touching the left boundary of $f(Q)$. We put the second ball $B_2$ touching the first ball $B_1$ with disjoint interior. We continue to put the balls in the above way, see Figure \ref{figure3}. In the end we have $\lfloor\frac{\lambda_1}{\lambda_2}\rfloor$ balls inside $f(Q)$ where $\lfloor\frac{\lambda_1}{\lambda_2}\rfloor$ is the integer part of $\frac{\lambda_1}{\lambda_2}$. By a simple  geometric estimate, we have that
\[
f(V_Q) \subset \bigcup_{i=1}^{\lfloor\frac{\lambda_1}{\lambda_2}\rfloor} \frac{2}{a} B_i, 
\]
where $\rho B(x,r):=B(x,\rho r)$. Thus we have 
\begin{equation}
\mu(f(V_Q))\leq \sum^{\lfloor\frac{\lambda_1}{\lambda_2}\rfloor}_{i=1} \mu(\frac{2}{a}B_i)\leq A\sum^{{\lfloor\frac{\lambda_1}{\lambda_2}\rfloor}}_{i=1}\mu(B_i)\leq A \mu(f(Q)),
\end{equation}
the second inequality holds by the doubling property of  $\mu$ and the constant $A$ depends on $\mu$ and $a$ only. Thus we have completed the proof with  $C=1/A$.
\end{proof}

\begin{proof}[Proof of Theorem \ref{carpets}.]
Let $E$ be a Bara\'nski carpet constructed as above. For every $n \in \N$, let  $R: = I_a\times I_b$ be an $n$-level rectangle of $E_n$ where $I_a, I_b$ are subintervals of $[0,1]$ whit length $a$ and $b$ respectively. Without loss of generality we  assume $a \geq b$. For the interval $I_a$, there exists a sequence of subintervals of $I_a$ denoted by $\{I_{\sigma(i)}\}^{N(R)}_{i=1}$ where $\sigma(i)\in S^{\ast}$ and $N(R) \in \N$. These subintervals have the following properties (Moran cover),
\begin{itemize}
\item  $I_a \subset \bigcup_{i} I_{\sigma(i)}$,

\item $a_{\min}\vert I_{\sigma(i)} \vert<b\leq \vert I_{\sigma(i)}\vert, 1\leq i \leq N(R)$, where 
$a_{\min}= \min_{1 \leq i\leq p}\{a_i \}$.

\item $int(I_{\sigma(i)}) \cap int(I_{\sigma(j)}) = \emptyset$ for $i \neq j$ where $int(I)$ means the interior of $I$. 
\end{itemize}
Thus we may write $R = \bigcup^{N(R)}_{i=1} I_{\sigma(i)} \times I_b$. Let $R_i:=I_{\sigma(i)} \times I_b$ and 
$I_{\sigma(i)} \times I_b= \bigcup^{q^{\vert \sigma(i)\vert}} _{j=1}R_{i,j}$ where $R_{i,j}$ is $(n+\vert\sigma(i)\vert)$-level rectangle with $int(R_{i,j}) \cap int(R_{i,j'}) =\emptyset$ for $ j\neq j'$. There is a cube $Q \subset [0,1]^2$ such that $Q \cap E_1 = \emptyset$. 
We use the same notation $V_Q$ as in Lemma \ref{dd}. 
For each $R_{i,j}, 1\leq i \leq N(R), 1\leq j \leq q^{\vert \sigma(i)\vert}$, 
denote by $f_{i,j}$ the affine map such that $f_{i,j} ([0,1]^2)= R_{i,j}$. See Figure \ref{figure4}. Denote
\[ 
G(R):= \bigcup^{N(R)}_{i=1}\bigcup_{j=1}^{q^{\vert \sigma(i)\vert}} f_{i,j}(Q). 
\]

\begin{figure}
\centering 
\includegraphics[width=0.9\textwidth]{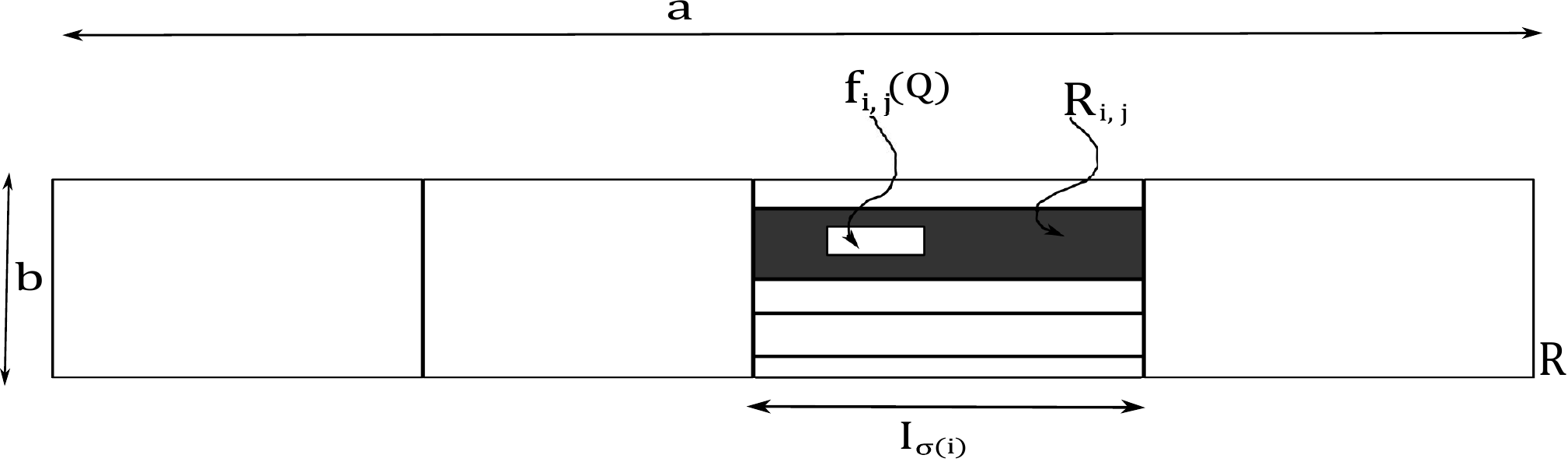}
\caption{For each $R_{i,j}$ there is a hole $f_{i,j}(Q)$.}
\label{figure4}
\end{figure}

Let $\mu \in \D([0,1]^2)$. By Lemma  \ref{dd}, there is a positive constant $C_1$ such that $\mu(f_{i,j}(Q))\geq C_1 \mu(f_{i,j}(V_Q)).$ Summing both sides over $j$ to get 
\begin{equation}\label{t1}
\sum_{j=1}^{q^{\vert\sigma(i)\vert}}\mu(f_{i,j}(Q))\geq C_1 \sum_{j=1}^{q^{\vert\sigma(i)\vert}}\mu(f_{i,j}(V_Q)). 
\end{equation}
Let $\widetilde{R_{i}}:= \bigcup^{q^{\vert\sigma(i)\vert}}_{j=1}f_{i,j}(V_Q)$. Notice that the side-length of $\widetilde{R_{i}}$ are comparable with $\vert I_{\sigma(i)}\vert$ for each $1\leq i\leq N(R)$. Thus there is a positive constant $C_2$ such that $\mu(\widetilde{R_{i}}) \geq C_2 \mu(R_i)$. Summing both sides over $i$,  we have 
\begin{equation}\label{t2}
\sum^{N(R)}_{i=1}\mu(\widetilde{R_{i}}) \geq C_2 \sum^{N(R)}_{i=1}\mu(R_i).
\end{equation}

Combine the estimates \eqref{t1} and \eqref{t2}, we arrive 
\begin{equation}\label{t3}
\mu(G(R)) \geq C_1C_2 \mu(R).
\end{equation}

Let $G_n := \cup_{R \in E_n} G(R)$,
then by estimate  \eqref{t3}, we have  that 
\begin{equation}\label{99}
\mu(G_n)\geq C_1C_2 \mu(E_n)\geq C_1C_2\mu(E).
\end{equation}

Given $n_k$, let  $\widetilde{n}_k= \max \{N(R): R \in E_n\}$ and $n_{k+1}=n_k+ \widetilde{n}_k+10$.
Let $k=1$, then we have a sequence $n_k$ and $G_{n_k}$. 
By our choice of $n_k$ and  $Q \cap E_1 = \emptyset$, we observe that the sets
 $G_{n_k}$ are  pairwise  disjoint subsets of $[0,1]^2$. Applying  estimate \eqref{99} to every $G_{n_k}$, we obtain 
\[
\infty>\mu(\bigcup_{k=1}^{\infty}G_{n_k})
\geq C_1C_2\sum^{\infty}_{k=1} \mu(E_{n_k}) \geq C_1C_2\sum^{\infty}_{k=1} \mu(E).
\]
Thus we have $\mu(E)=0$. We complete the proof by the arbitrary choice of $ \mu \in\D([0,1]^2)$.
\end{proof}

\subsection{Bedford-McMullen sponges in $\R^3$}(Suggested by V. Suomala) Applying similar argument as in the proof of Theorem \ref{carpets}, we are going to prove that \emph{Bedford-Mcmullen (BM) sponges}  are thin for doubling measures on $[0,1]^3$. We show the construction of $BM$ sponges first. Let $p,q,u \in \N$ and $2\leq p\leq q\leq u$. Divide $[0,1]^3$ into $p \times q \times u$ rectangles of sides $1/p, 1/q$ and $1/u$. Select a subcollection of
these rectangles to form $E_1$. Iterate this construction in
the usual way, with each rectangle replaced by an affine copy of $E_1$, and let $E =  \cap_{n\geq1} E_n$ be the limiting set obtained. 
Let $Q \subset [0,1]^3$ be a cube that $Q \cap E_1= \emptyset$ and 
$V_Q:= [0,1]^3\cap \pi_{xy}^{-1}(\pi_{xy}(Q))$ where $\pi_{xy}$ is the orthogonal projection from $\R^3$  to plane $\R_{xy}$, here $\R_{xy}= \{(x,y, z): z=0\}$. 

\begin{pro}
Let $E$ be a $BM$ sponge, then $E$ is thin for doubling measures.
\end{pro}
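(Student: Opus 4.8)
The plan is to follow the architecture of the proof of Theorem \ref{carpets} almost verbatim, with the planar base replaced by the $xy$-plane and the single ``fill'' direction $y$ replaced by the vertical direction $z$ along which $V_Q$ is the full column. The engine is a three-dimensional analogue of Lemma \ref{dd}: for a diagonal affine map $f=D(\lambda_1,\lambda_2,\lambda_3)+t$ whose smallest ratio is $\lambda_3$ (the $z$-direction), one has $\mu(f(Q))\ge C\mu(f(V_Q))$ with $C$ depending only on $\mu$ and the edge length $a$ of $Q$. I would prove this exactly as Lemma \ref{dd}: pack disjoint balls of diameter $a\lambda_3$ (the shortest side of $f(Q)$) across the $xy$-face of $f(Q)$, observe that $f(V_Q)$ exceeds $f(Q)$ only in $z$ and only by the factor $1/a$, so that enlarging each ball by the fixed factor $2/a$ already covers $f(V_Q)$, and conclude by doubling that $\mu(f(V_Q))\le A'\sum_i\mu(B_i)\le A'\mu(f(Q))$. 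As in the planar case the crucial point is that the enlargement factor $2/a$ is a constant independent of $f$ and of the level, so the constant stays uniform.

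Next I would set up the decomposition. Fix an $n$-level box $R$, of dimensions $p^{-n}\ge q^{-n}\ge u^{-n}$. Using a Moran cover in $x$ and a Moran cover in $y$, both down to the finest scale $u^{-n}$, I would tile $R$ by near-cubes $K$ of side comparable to $u^{-n}$. Inside each $K$ the largest fundamental boxes that fit lie at the level $m$ with $p^{-m}\approx u^{-n}$; such a box $b$ spans the whole $x$-extent of $K$ but is thin in $y$ and in $z$. The hypothesis of the hole lemma is automatic here, since $p\le q\le u$ makes $z$ the shortest side of every box. For each such $b$ the hole lemma gives $\mu(f_b(Q))\ge C_1\mu(f_b(V_Q))$, and summing the disjoint vertical columns $f_b(V_Q)$ over the $z$-stacking of boxes produces, for each $y$-cell, a full-height column; their disjoint union $\widetilde{K}$ then satisfies $\mu(G(K))\ge C_1\mu(\widetilde{K})$, where $G(K)$ collects the holes $f_b(Q)$ inside $K$.

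The crux, and the one genuinely new point compared with Theorem \ref{carpets}, is the comparison $\mu(\widetilde{K})\ge C_2\mu(K)$. Because $b$ is thin in $y$, the $xy$-footprint of $\widetilde{K}$ is a single slab of width $\approx a\,u^{-n}$ in $x$ but is \emph{dotted} in $y$: it meets only the fixed fraction $a$ of each of the (exponentially many) $y$-cells of side $\tau:=q^{-m}$. For a doubling measure that is not isotropic one cannot compare the measures of a thin slab and its translate, so a naive ``cover $K$ by translates of $\widetilde{K}$'' argument fails; this is exactly the obstacle that the carpet proof never had to face, because there the analogue of $\widetilde{K}$ was already a \emph{fat} strip. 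The remedy is to tile the common $x$-slab $\Sigma$ containing $\widetilde{K}$ by cubes of side $\tau$: inside each such cube $K_\alpha$ the footprint is precisely a slab of relative thickness $a$ in the $y$-direction, hence it contains an inscribed cube of side $a\tau$, and Lemma \ref{lemma1} gives $\mu(\widetilde{K}\cap K_\alpha)\ge C^{-1}a^{\beta}\mu(K_\alpha)$ with no translation involved. Summing over the $\tau$-cubes yields $\mu(\widetilde{K})\ge C^{-1}a^{\beta}\mu(\Sigma)$, and since $\Sigma$ is a fat slab of the cube $K$ a second application of Lemma \ref{lemma1} (inscribe a cube of side $\approx a\,u^{-n}$ in $\Sigma$) gives $\mu(\Sigma)\ge C^{-1}a^{\beta}\mu(K)$. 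I expect this two-stage cube-wise estimate to be where the real work lies.

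Finally the global bookkeeping is identical to that of Theorem \ref{carpets}: chaining the three estimates gives $\mu(G(R))\ge C_1C_2\,\mu(R)$, hence $\mu(G_n)\ge C_1C_2\,\mu(E)$ for $G_n=\bigcup_{R\in E_n}G(R)$; since $Q\cap E_1=\emptyset$ one may pass to a sparse sequence of levels $n_k$, inserting a fixed gap exactly as in the carpet proof, along which the sets $G_{n_k}$ are pairwise disjoint subsets of $[0,1]^3$. Then $\infty>\sum_k\mu(G_{n_k})\ge C_1C_2\sum_k\mu(E)$ forces $\mu(E)=0$, and the arbitrariness of $\mu\in\D([0,1]^3)$ finishes the proof.
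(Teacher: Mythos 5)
Your proposal is correct, and its skeleton (a hole $Q$ with $Q\cap E_1=\emptyset$, a hole-versus-column lemma, a per-box estimate $\mu(G(R))\gtrsim\mu(R)$, then summation over a sparse sequence of levels) is the paper's. But the way you get from the full-height columns to the full box is genuinely different. The paper never tiles $R$ into near-cubes: after the $z$-stacking step \eqref{h1} it simply applies the anisotropic packing argument of Lemma \ref{dd} twice more, first passing from each column $V_Q(i,j)$ to the slab $V_Q'(i,j)=R_{i,j}\cap\pi_{zx}^{-1}(\pi_{zx}V_Q(i,j))$, which is full in $y$ and $z$ and thin only in $x$ (estimate \eqref{h2}), and then from $\bigcup_j V_Q'(i,j)$ to the box $R_i$ (estimate \eqref{h3}); summing over $i$ gives \eqref{h4}. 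So the ``dotted in $y$'' obstacle you identify is real, and the paper's resolution is to insert the intermediate $y$-extension $V_Q'(i,j)$, handled by the same ball-packing/dilation trick as Lemma \ref{dd}; that trick compares a thin set to its one-directional fattening using only dilated balls, so it is translation-free and the non-isotropy issue you were guarding against never arises. Your alternative --- tiling $R$ into near-cubes of side $\approx u^{-n}$ and applying the isotropic cube-in-cube estimate of Lemma \ref{lemma1} cube-wise, first at scale $q^{-m}$ to absorb the $y$-dotting and then once at scale $u^{-n}$ to absorb the $x$-thinness --- also works (your observation that every intersection $\widetilde{K}\cap K_\alpha$ contains a cube of side $a\tau$ because $ap^{-m}\geq aq^{-m}$ is the right one), and it is closer in spirit to estimate \eqref{t2} of the carpet proof. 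What the paper's route buys is uniformity: one lemma used three times, no multi-scale tiling and no bounded-overlap bookkeeping. What your route buys is that two of the three anisotropic packing arguments are replaced by the most basic doubling estimate, Lemma \ref{lemma1}.

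One slip to correct: the gap between consecutive levels $n_k$ cannot be taken \emph{fixed}. The holes of $G_{n_k}$ sit in grid boxes of level up to $n_k+n(R)$, and $n(R)\approx n_k(\log u/\log p-1)$ grows with $n_k$ whenever $p<u$, so pairwise disjointness of the $G_{n_k}$ forces $n_{k+1}>n_k+\max_{R\in E_{n_k}}n(R)$; this is exactly the level-dependent choice $n_{k+1}=n_k+\widetilde{n}_k+10$ made in the proof of Theorem \ref{carpets}. Since you defer to that proof, this is a mischaracterization of it rather than a gap in your argument, but with a literally constant gap the final disjointness claim would fail.
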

\begin{proof}
Let $R$ be a $n$-th rectangle of $E_n$. There is $n(R) \in \N$ such that 
\[
u^{-n}\leq p^{-n-n(R)}<u^{-n+1}.
\]
Divide $R$ (in the same way as the construction of $E_1$) $n(R)$ times into ($p \times q \times u)^{n(R)}$ rectangles. Let 
\[
I(R):= \{(i,j,k): 1\leq i \leq p^{n(R)} , 1\leq j \leq q^{n(R)}, 1\leq k \leq u^{n(R)}\}. 
\]
We may write $R= \cup_{\sigma \in I(R)} R_\sigma$. For $\sigma=(i,j,k)$ let $f_{i,j,k}:=f_\sigma$. Denote $R_{i,j}= \cup^{u^{n(R)}}_{k=1} R_{i,j,k}$ and $R_i= \cup^{q^{n(R)}}_{j=1} R_{i,j}$. For each $R_\sigma, \sigma \in I(R)$, there is an affine map $f_\sigma$ such that $R_\sigma= f_\sigma([0,1]^3)$. 

Denote $G(R)= \cup_{\sigma \in I(R)}f_\sigma(Q)$. Let $\mu \in \D([0,1]^3)$. We are going to prove
$\mu(G(R)) \gtrsim \mu(R)$
where $\gtrsim$ means there is a constant $C$  depends on $\mu$ only such that $\mu(G(R)) \geq C\mu(R)$. For the convenience in what follows we will use notation $\gtrsim$ when there is a constant depending on $\mu$ only. Note that the constant  may be different in different places. Applying the similar argument as in Lemma  \ref{dd}, it's not hard to see 
\begin{equation}
\mu(f_{\sigma}(Q))\gtrsim \mu(f_{\sigma}(V_Q)), \sigma \in I(R).
\end{equation}
Thus
\begin{equation}\label{h1}
\sum^{u^{n(R)}}_{k=1} \mu( f_{i,j,k}(Q))
\gtrsim \sum^{u^{n(R)}}_{k=1}\mu(f_{i,j,k} (V_Q))= \mu(V_Q(i,j)),
\end{equation}
where $V_Q(i,j):= \cup^{u^{n(R)}}_{k=1} f_{i,j,k}(V_Q)$. Let $V_Q'(i,j):=R_{i,j}\cap  \pi_{zx}^{-1}(\pi_{zx}V_Q(i,j)).$
Applying the same argument as in Lemma  \ref{dd} again, we get
\begin{equation}\label{h2}
\mu(V_Q(i,j))\gtrsim \mu(V'_Q(i,j))
\end{equation}
and 
\begin{equation}\label{h3}
\mu(\cup^{j=q^{n(R)}}_{j=1} V'_Q(i,j))\gtrsim \mu(R_i).
\end{equation}
Note that $f_\sigma(Q)$ are pair disjoint for $\sigma \in I(R)$. Combine the estimates \eqref{h1}, \eqref{h2} and \eqref{h3} we arrive 
\begin{equation}\label{h4}
\mu(G(R)) \gtrsim \mu(R).
\end{equation}

Let $G_n := \cup_{R \in E_n} G(R)$,
then by estimate \eqref{h4}, we have  that 
\begin{equation}
\mu(G_n)\gtrsim \mu(E_n) \geq \mu(E).
\end{equation}
Apply the same argument as in the proof of Theorem \ref{carpets}, we obtain the result.
\end{proof}

\begin{rem}
It can be believed that high dimensional Bedford-McMullen self-affine sets 
are thin for doubling measures. 
Note that Bedford-McMullen self-affine sets in $\R^d, d\geq 2$ satisfies $OSCH$. 
\end{rem}

\section{Purely atomic measures}
We say that a  measure is \emph{purely  atomic}, if it has full measure on a countable set. For the results related purely atomic doubling measures, see \cite{cs,kw, lww, lw, www2}. It was asked in \cite{lww} whether there exist compact set $X \subset \R$ with positive Lebesgue measure so that all doubling measures $\mu$ on $X$ are purely atomic. The answer is negative given by \cite{cs,lw}. In \cite{cs,lw}, they proved that any compact set of $\R^d$ with positive Lebesgue measure carries a doubling measure which is not purely atomic. We extend their result in the following way. 

\begin{pro}\label{d-homo}
Let $X$ be a closed subset of $\R^{d}$ with positive Lebesgue measure, then every $d$-homogeneous measure on $X$ is not purely atomic; furthermore, let $E\subset X$ and $\L^d(E)>0$, then $\mu(E)>0$ for every $d$-homogeneous measure $\mu$ on $X$. 
\end{pro}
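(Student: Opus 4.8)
The plan is to establish the stronger ``furthermore'' assertion first---that $\L^d(E)>0$ forces $\mu(E)>0$ for every $d$-homogeneous $\mu$, i.e.\ that $\L^d$ is absolutely continuous with respect to $\mu$ on $X$---and then to read off the non-atomicity statement as a corollary. The only feature of $d$-homogeneity I shall use is its defining lower mass bound: there is a constant $C\ge 1$, depending only on $\mu$, so that $\mu(B(x,R))\le C\,(R/r)^{d}\,\mu(B(x,r))$ for all $x$ and all $0<r\le R$. Taking $R=2r$ shows in particular that $\mu$ is doubling, so (by the convention of the paper's definition) every ball centred at a point of $X$ has positive finite mass.

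Toward a contradiction, suppose $E\subset X$ satisfies $\L^d(E)>0$ yet $\mu(E)=0$. Since $\L^d$-almost every point of $E$ is a density point, I may fix $x_0\in E$ and a radius $R_0>0$ with $\L^d(E\cap B_0)>0$, where $B_0:=B(x_0,R_0)$; as $x_0\in X$, also $\mu(B_0)>0$. The next step is a uniform pointwise comparison: for every $x\in B_0$ and every $0<r\le R_0$ I would combine the homogeneity bound $\mu(B(x,r))\ge C^{-1}(r/R_0)^{d}\mu(B(x,R_0))$ with the inclusion $B_0\subset B(x,2R_0)$ and one application of doubling to get $\mu(B(x,R_0))\ge c_0\,\mu(B_0)$. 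This yields $\mu(B(x,r))\ge \kappa\, r^{d}$ with $\kappa:=C^{-1}c_0\,\mu(B_0)R_0^{-d}>0$ a constant independent of the ball. Equivalently, every small ball centred in $B_0$ has $\mu$-mass bounded below by a fixed multiple of its Lebesgue measure.

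The heart of the proof is turning this ball-by-ball comparison into the set estimate $\mu(E\cap B_0)\gtrsim \L^d(E\cap B_0)$. Since $\mu(E\cap B_0)=0$ and $\mu$ is Borel regular and finite on $B_0$, for each $\epsilon>0$ I choose an open $U\supset E\cap B_0$ with $\mu(U)<\epsilon$. Each $x\in E\cap B_0$ is then the centre of arbitrarily small balls $B(x,r)\subset U$ with $r\le R_0$, so the Besicovitch covering theorem extracts a countable subfamily $\{B(x_i,r_i)\}$ covering $E\cap B_0$ with overlap bounded by a dimensional constant $N_d$. Then $\L^d(E\cap B_0)\le \omega_d\sum_i r_i^{d}\le (\omega_d/\kappa)\sum_i\mu(B(x_i,r_i))\le (\omega_d N_d/\kappa)\,\mu(U)<(\omega_d N_d/\kappa)\,\epsilon$, and letting $\epsilon\to0$ gives $\L^d(E\cap B_0)=0$, contradicting the choice of $B_0$; hence $\mu(E)>0$. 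The main obstacle is precisely this passage: the pointwise bound alone does not control $\mu$ of the thin set $E$, and one needs a bounded-overlap covering so that the small balls, whose total Lebesgue content recaptures $\L^d(E\cap B_0)$, have total $\mu$-mass no larger than a fixed multiple of $\mu(U)$. Keeping the comparison constant $\kappa$ uniform across the balls is what forces the preliminary localisation to a single reference ball $B_0$.

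Finally, the first assertion follows. If some $d$-homogeneous $\mu$ on $X$ were purely atomic, it would have full mass on a countable set $A$, so that $\mu(X\setminus A)=0$; but $A$ is Lebesgue-null, whence $\L^d(X\setminus A)=\L^d(X)>0$, and the part just proved (with $E=X\setminus A$) gives $\mu(X\setminus A)>0$, a contradiction.
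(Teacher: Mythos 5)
Your proof is correct and follows essentially the same route as the paper: localize to a ball $B_0$ with $\L^d(E\cap B_0)>0$, use $d$-homogeneity together with doubling to get the uniform lower bound $\mu(B(x,r))\geq \kappa r^d$ for balls centred in $B_0$, convert this pointwise bound into the domination $\mu(A)\geq c\,\L^d(A)$ for $A\subset B_0$, and then deduce both the ``furthermore'' claim and non-atomicity. The only difference is presentational: the paper cites Mattila (p.~95) for the conversion step, whereas you prove it directly via outer regularity and the Besicovitch covering theorem, which is exactly the standard proof of the cited fact.
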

A measure $\mu$ is called an \emph{$s$-homogeneous measure} on $X$ if there is a constant $C$ such that  for any $\lambda \geq 1$,
\[
0<\mu(B(x,\lambda r)) \leq C \lambda^{s}\mu(B(x,r))<\infty.
\]
Denote by $\mathcal{D}_{s}(X)$ all  $s$-homogeneous measure on $X$. It's easy to see that $\mathcal{D}(X)= \bigcup_{s>0}\mathcal{D}_{s}(X).$ The $s$-homogeneous measures are related to $s$-homogeneous spaces,  see \cite{l, vk}. 

\begin{proof}[Proof of Proposition \ref{d-homo}.]
Let $E\subset X$ and $\L^d(E)>0$. Let  $\mu \in\mathcal{D}_{d}(X)$ (in \cite{ls} they proved that 
$\mathcal{D}_{d}(X)\neq \varnothing$).  We are going to prove that $\mu(E)>0$ (this implies that $\mu$ is not purely atomic).

We consider $B(x,r)$ as an open ball of metric space $X$ (induced metric from $\R^d$) in the following for the convenience. Let $x_0 \in X$, then there exists $n_0$ such that $\L^d(E\cap B(x_0, n_0)) >0$. Since $\mu \in \mathcal{D}_{d}(X)$, there is constant $C$ such that for any ball $B(x,r) \subset  B(x, n_0)$, we have
\begin{equation}\label{dhome}
\mu(B(x,n_0) \leq C (\frac{n_0}{r})^d \mu(B(x,r)). 
\end{equation}
Applying the doubling property of $\mu$, we have that there is a constant $C_1$ such that  
$\mu( B(x_0, n_0))\leq C_1 \mu(B(x, n_0))$ for any $x \in B(x_0,n_0)$. Thus there is a positive constant $C_2$ such that
\begin{equation}\label{lateruse}
 \mu (B(x,r))\geq  C_2 r^d \text{ for any } B(x,r) \subset B(x_0, n_0).
\end{equation}
It's well known that  \eqref{lateruse} implies (see \cite[p.95]{ma})
\begin{equation}\label{end}
\mu(A)\geq C_3 \L^d(A) \text{ for any } A \subset B(x_0, n_0),
\end{equation}
where $C_3$ is positive constant depends on $\mu,d, n_0$ only. By the monotone property of $\mu$ and estimate \eqref{end}, we have
\[
\mu(E) \geq \mu(E \cap B(x_0, n_0))\geq C_3 \L^d(E \cap B(x_0, n_0))> 0. 
\] 
We complete the proof by the arbitrary choice of $\mu \in \D_d(X)$.
\end{proof}

\end{document}